\theoremstyle{plain}
\newtheorem{thm}{Theorem}[section]
\newtheorem{lem}[thm]{Lemma}
\newtheorem{cor}[thm]{Corollary}
\theoremstyle{remark}
\newtheorem{rem}[thm]{Remark}
\theoremstyle{definition}
\newtheorem{exam}[thm]{Example}
\newtheorem{nota}[thm]{Notation}
\newtheorem{dfn}[thm]{Definition}
\newcommand{\bbC}{\mathbb C}
\newcommand{\bbN}{\mathbb N}
\newcommand{\bbR}{\mathbb R}
\newcommand{\cW}{\mathcal W}
\newcommand{\bd}{\mathbf d}
\newcommand{\bs}{\mathbf s}
\newcommand{\bt}{\mathbf t}
\newcommand{\bx}{\mathbf x}
\newcommand{\rB}{B}
\newcommand{\rE}{E}
\newcommand{\rF}{F}
\newcommand{\rG}{G}
\newcommand{\rH}{H}
\newcommand{\rM}{M}
\newcommand{\rU}{U}
\newcommand{\rV}{V}
\newcommand{\rZ}{Z}
\DeclareMathOperator{\Forall}{\forall}
\DeclareMathOperator{\id}{id}
\DeclareMathOperator{\tr}{tr}
\DeclareMathOperator{\op}{op}
\DeclareMathOperator{\str}{str}
\DeclareMathOperator{\HS}{HS}
\DeclareMathOperator{\Ker}{Ker}
\DeclareMathOperator{\Rg}{Rg}
\DeclareMathOperator{\rk}{rk}
\begin{document}

\title[Regularity of torsion form]
{Regularity of analytic torsion form on families of normal coverings}

\author{Bing Kwan SO}
\author{GuangXiang SU}
\thanks{Chern Institute of Mathematics and LPMC, Nankai University, Tianjin, 300071, China. \\
e-mail: bkso@graduate.hku.hk (B.K. So), guangxiangsu@gmail.com (G. Su)}

\begin{abstract}We prove the smoothness of the $L ^2 $-analytic torsion form for fiber bundle with positive Novikov-Shubin invariant.
We do so by generalizing the arguments of Azzali-Goette-Schick to an appropriate Sobolev space,
and proving that the Novikov-Shubin invariant is also positive in the Sobolev setting,
using an argument of Alvarez Lopez-Kordyukov.
\end{abstract}

\maketitle

\section{Introduction}
Let $M$ be a closed Riemannian manifold and $F$ be a flat vector bundle on $M$, Ray and Singer in \cite{RS} introduced the analytic torsion which is the analytic analogue of the combinatorial torsion (cf. \cite{Mi}). Let $\rZ \to \rM \xrightarrow{\pi} \rB $ be a fiber bundle with connected closed fibers $Z_{x}=\pi^{-1}(x)$ and $F$ be a flat complex vector bundle on $M$ with a flat connection $\nabla^{F}$ and a Hermitian metric $h^{F}$. Let $T^{H}M$ be a horizontal distribution for the fiber bundle and $g^{TZ}$ be a vertical Riemannian metric. Then in  \cite{Bismut;AnaTorsion} Bismut and Lott introduced the torsion form $\mathcal{T}(T^{H}M, g^{TZ},h^{F})\in \Omega(B)$ (cf. \cite[(3.118)]{Bismut;AnaTorsion}) defined by
\begin{align}
\label{BLDfn}
\mathcal{T}(T^{H}M,g^{TZ},h^{F})
=- \int_{0}^{+\infty} & \Big[f^{\wedge}(C_{t}',h^{W})- \frac{\chi'(Z;F)}{2}f'(0) \\ \nonumber
&-\Big(\frac{\dim (Z) \rk (F)\chi(Z)}{4}
-\frac{\chi'(Z;F)}{2} \Big) f'\Big(\frac{i\sqrt{t}}{2}\Big) \Big] \frac{dt}{t}.
\end{align}
See \cite{Bismut;AnaTorsion} for the meaning of the terms in the integrand. To show the integral in the above formula is well-defined, it needs to calculate the asymptotic of $f^{\wedge}(C_{t}',h^{W})$ as $t\to 0$ and the asymptotic as $t\to \infty$. For the asymptotic as $t\to 0$, they used the local index technique. For the asymptotic as $t\to \infty$, the key fact is that the fiber $Z$ is closed, so the fiberwise operators involved have uniform positive lower bound for positive eigenvalues. They also proved a $C^{\infty}$-analogue of the Riemann-Roch-Grothendieck theorem and proved that the torsion form is the transgression of the Riemann-Roch-Grothendieck theorem (cf. \cite[Theorem 3.23]{Bismut;AnaTorsion}) and showed the zero degree part of $\mathcal{T}(T^{H}M, g^{TZ},h^{F})\in\Omega(B)$ is the Ray-Singer analytic torsion {(cf. \cite[Theorem 3.29]{Bismut;AnaTorsion})}.

On the other hand, the $L^2$-analytic torsion was defined and studied by several authors, cf. \cite{CM}, \cite{L}, \cite{M} and etc. So it is natural to extend the $L^2$-analytic torsion to the family case, that is to define and study the Bismut-Lott torsion form when the fiber $Z$ is non-compact. From the above we see that it needs to study the asymptotic of the $L^2$ analogue of $f^{\wedge}(C_{t},h^W)$ as $t\to 0$ and $t\to \infty$. Since in the $L^2$ case $f^{\wedge}(C_{t}',h^W)$ has the same asymptotic as $t\to 0$, so this part is easy. But in general the integral at $\infty$ does not converge, since in the $L^2$ case the positive eigenvalues of fiberwise operator involved in $f^{\wedge}(C_{t}',h^{W})$ may not have a positive lower bound. To overcome this difficulty, one considers the special case where the Novikov-Shubin invariant is (sufficiently) positive.
In \cite{Gong;CoverTorsion} Gong and Rothenberg defined the $L^2$-analytic torsion form
and proved that the torsion form is smooth,
under the condition that the Novikov-Shubin invariant is at least half of the dimension of the base manifold.
Heitsch-Lazarov \cite{Heitsch;FoliationTorsion} generalized essentially the same arguments to foliations.
In \cite{Schick;NonCptTorsionEst} Azzali, Goette and Schick proved
that the integrand defining the $L^2$-analytic torsion form,
as well as several other invariants related to the signature operator,
converges provided the Novikov-Shubin invariant is positive (or of determinant class and $L^{2}$-acyclic).
However, they did not prove the smoothness of the $ L ^2 $-analytic torsion form.
To consider transgression formula, they had to use weak derivatives.

The aim of this paper is to establish the regularity of the $L^{2}$-analytic torsion form,
in the case when the Novikov-Shubin invariant is positive.
Our motivation comes from the study of analytic torsion on some ``non-commutative" spaces
(along the lines of \cite{Lott;FoliationInd}, etc., for local index).
In this case one considers universal differential forms (as in \cite{Lott;FoliationInd}),
and the Duhamel's formula for the heat operator having infinitely many terms.
Instead, one makes essential use of the results of \cite{Schick;NonCptTorsionEst} to ensure that
\eqref{BLDfn} is well defined in the non-commutative case.
We achieve this result by generalizing Azzali-Goette-Schick's arguments to some Sobolev spaces.

The rest of the paper is organized as follows. In Section 2,
we define Sobolev norms on the spaces of kernels on the fibered product groupoid.
Unlike \cite{Schick;NonCptTorsionEst},
we consider Hilbert-Schmit type norms on the space of smoothing operators.
Given a kernel, the Hilbert-Schmit norm can be explicitly written down.
As a result,
we are able to take into account derivatives in both the fiber-wise and transverse directions,
with the help of a splitting similar to \cite{Heitsch;FoliationHeat}. In section 3, we turn to prove that having positive Novikov-Shubin invariant
implies positivity of the Novikov-Shubin invariant in the Sobolev settings.
We adapt an argument of Alvarez Lopez-Kordyukov \cite{Lopez;FoliationHeat}. In Section 4,
we apply the arguments in \cite{Schick;NonCptTorsionEst}
and conclude that the integral \eqref{BLDfn} converges in all Sobolev norms, and hence the regularity of the $L^{2}$ analytic torsion form.
\\
\
\\
{\bf Acknowledgement} The authors are very grateful to referees for their very careful reading of the manuscript of the paper and many valuable suggestions. The second author was supported by NSFC 11571183.

\section{Preliminaries}
In this section, we will define Sobolev norms on the space of kernels on the fibered product groupoid. 
\subsection{The geometric settings}
\label{Dfn}
Let $\rZ \to \rM \xrightarrow{\pi} \rB $ be a fiber bundle with connected fibers $Z_x=\pi^{-1}(x)$, $x\in B$. Let $\rE \xrightarrow{\wp} \rM$ be a vector bundle.
We assume $\rB$ is compact.
Let $\rV := \Ker (d \pi ) \subset T \rM$.

We suppose that there is a finitely generated discrete group $\rG$ acting on $\rM$ from the right freely,
properly discontinuously. We also assume that the group $G$ acts on $B$ such that the actions commute with $\pi$ and $\rM _0 := \rM / \rG $ is a compact manifold.
Since the submersion $\pi$ is $\rG$-invariant,
$\rM _0 $ is also foliated and denote such foliation by $\rV _0 $.
Fix a distribution $\rH _0 \subset T \rM _0 $ complementary to $\rV _0$.  Fix a metric on $V_{0}$ and take a $G$-invariant metric on $B$, then these induce a Riemannian metric on $M_{0}$ as $g^{V_0}\oplus \pi^* g^{TB}$ on $TM_0=V_0\oplus H_0$.

Since the projection from $\rM$ to $\rM _0 $ is a local diffeomorphism,
one gets a $\rG$-invariant splitting $T \rM = \rV \oplus \rH $.
Denote by $P ^\rV , P ^\rH$ respectively the projections to $\rV $ and $\rH$.
Moreover, one gets a $\rG$-invariant metric on $\rV$ and a Riemannian metric on $M$ as $g^{TM}=g^{V}\oplus \pi ^*g^{TB}$ on $TM=V\oplus H$. 

Given any vector field $X \in \Gamma ^\infty (T \rB)$,
denote the horizontal lift of $X $ by $X ^\rH \in \Gamma ^\infty (\rH) \subset \Gamma ^\infty (T \rM )$.
By our construction, $| X ^\rH | _{g _\rM} (p) = | X | _{g _\rB } (\pi (p))$.

Denote by $\mu _x , \mu _\rB $ respectively the Reimannian measures on $\rZ _x $ and $\rB $.

\begin{dfn}
Let $\rE \xrightarrow{\wp} \rM$ be a complex vector bundle.
We say that $\rE $ is a contravariant $\rG$-bundle if $\rG$ also acts on $\rE$ from the right,
such that for any $v \in \rE , g \in \rG $, $\wp (v g) = \wp (v) g \in \rM $,
and moreover $\rG$ acts as a linear map between the fibers.

The group $\rG$ then acts on sections of $\rE$ from the left by
$$ s \mapsto g ^* s, \quad (g ^* s ) (p) := s (p g ) g ^{-1} \in \wp ^{-1} (p), \quad \forall \; p \in \rM .$$
\end{dfn}

We assume that $\rE$ is endowed with a $\rG$-invariant metric $g _\rE$,
and a $\rG$-invariant connection $\nabla ^\rE$
(which is obviously possible if $\rE$ is the pullback of some bundle on $\rM _0$).
In particular, for any $G$-invariant section $s$ of $\rE$,
$| s |$ is a $G$-invariant function on $\rM$.

In the following, for any vector bundle $\rF$ we denote its dual bundle by $\rF'$.

Recall that the ``infinite dimensional bundle'' over $\rB$ in the sense of Bismut is a vector bundle with typical fiber
$\Gamma _c ^\infty (\rE |_{\rZ _x } ) $ (or other function spaces) over each $x \in \rB$.
We denote by $\rE _\flat$ for such Bismut bundle.
The space of smooth sections on $\rE _\flat$ is, as a vector space, $\Gamma ^\infty _c (\rE )$.
Each element $s \in \Gamma ^\infty _c (\rE ) $ is regarded as a map
$$ x \mapsto s |_{\rZ _x} \in \Gamma _c ^\infty (\rE |_{\rZ _x } ) , \quad \Forall x \in \rB .$$
In other words, one defines a section on $\rE _\flat $ to be smooth,
if the images of all $x \in \rB$ fit together to form an element in $\Gamma _c ^\infty (\rE )$.
In particular,
$ \Gamma ^\infty _c ((\rM \times \bbC ) _\flat ) = C ^\infty _c (\rM ),$
and one identifies $\Gamma ^\infty _c (T \rB \otimes (\rM \times \bbC ) _\flat ) $
with $\Gamma ^\infty _c (\rH )$ by
$X \otimes f \mapsto f X ^\rH $.

\subsection{Covariant derivatives and Sobolev spaces}


Let $\nabla ^\rE $ be a $\rG$-invariant connection on $\rE$.
Denote by $\nabla ^{T \rM }, \nabla ^{T \rB} $ the Levi-Civita connections
(with respect to the metrics defined in the last section).
Note that $[X ^\rH , Y ] \in \Gamma ^\infty (\rV )$ for any vertical vector field $Y \in \Gamma ^\infty (\rV )$.
One naturally defines the connections 
\begin{align*}
\nabla ^{\rV _\flat} _X Y
:= [X ^\rH , Y ] , \quad & \Forall Y \in \Gamma ^\infty (\rV _\flat ) \cong \Gamma ^\infty (\rV ), \\
\nabla ^{\rE _\flat } _X s := \nabla ^\rE _{X ^\rH } s ,
\quad & \Forall s \in \Gamma ^\infty (\rE _\flat ) \cong \Gamma _{c}^\infty (\rE ).
\end{align*}

\begin{dfn}
\label{DiffDfn1}
The covariant derivative on $\rE _\flat $ is the map
$$\dot \nabla ^{\rE _\flat } : 
\Gamma ^\infty (\otimes ^\bullet T ^* \rB \bigotimes \otimes ^\bullet \rV' _\flat \bigotimes \rE _\flat )
\to \Gamma ^\infty (\otimes ^{\bullet + 1} T ^* \rB \bigotimes \otimes ^\bullet \rV' _\flat \bigotimes \rE _\flat ),$$
defined by
\begin{multline}
\left(\dot \nabla ^{\rE _\flat } s \right)(X _0 , X _1 , \cdots, X _k ; Y _1 , \cdots,Y _l )
:=\nabla ^{\rE _\flat } _{X _0} s ( X _1 , \cdots, X _k ; Y _1 , \cdots, Y _l ) \\
- \sum _{j=1} ^l s (X _1 , \cdots, X _k ; Y _1 , \cdots , \nabla ^{\rV _\flat} _{X _0 } Y _j , \cdots , Y _l ) 
- \sum _{i=1} ^k s (X _1 , \cdots , \nabla ^{T \rB } _{X _0 } X _i , \cdots , X _k ; Y _1 , \cdots Y _l ) ,
\end{multline}
for any $k, l \in \bbN , X _0 , \cdots, X _k \in \Gamma ^\infty (T \rB), Y _1 , \cdots, Y _l \in \Gamma ^\infty (\rV)$.
\end{dfn}

Clearly, taking covariant derivative can be iterated,
which we denote by $(\dot \nabla ^{\rE _\flat }) ^m $, \\ $ m = 1, 2, \cdots$.
Note that $(\dot \nabla ^{\rE _\flat }) ^m $ is a differential operator of order $m$.

Also, we define
$\dot \partial ^{\rV } :
\Gamma ^\infty (\otimes ^\bullet T ^* \rB \bigotimes \otimes ^\bullet \rV' _\flat \bigotimes \rE _\flat )
\to \Gamma ^\infty (\otimes ^\bullet T ^* \rB \bigotimes \otimes ^{\bullet + 1} \rV' _\flat \bigotimes \rE _\flat )$ by
\begin{multline}
\label{VertDiff1}
\left(\dot \partial ^{\rV } s \right)(X _1 , \cdots, X _k ; Y _0 , Y _1 , \cdots, Y _l )
:= \nabla ^\rE _{Y _0} s ( X _1 , \cdots, X _k ; Y _1 , \cdots, Y _l ) \\
- \sum _{j=1} ^l s (X _1 , \cdots, X _k ; Y _1 , \cdots , P ^\rV (\nabla ^{T \rM} _{Y _0 } Y _j) , \cdots , Y _l ) .
\end{multline}

In the following definition,
we regard $(\dot \nabla ^{\rE _\flat})^i (\dot\partial^{V}) ^j s
\in \Gamma ^\infty ( \otimes ^i \rH' \bigotimes \otimes ^j \rV' \bigotimes \rE_{\flat} )$.
\begin{dfn}
\label{SobDfn}
For $s\in \Gamma^{\infty}_{c}(E)$, we define its $m$-th Sobolev norm by
\begin{equation}
\| s \| ^2 _m
:= \sum _{i+j \leq m} \int _{x \in \rB} \int _{y \in \rZ _x }
\big| (\dot \nabla ^{\rE _\flat})^i (\dot \partial ^{\rV} ) ^j s \big| ^2 (x, y)
\mu _x (y) \mu _\rB (x).
\end{equation}
Denote by $\cW ^m (\rE)$ the Sobolev completion of $\Gamma ^\infty _c (\rE ) $ with respect to $\| \cdot \| _m $.
\end{dfn}

Recall that an operator $A$ is called $C^{\infty}$-bounded if in normal coordinates the coefficients and their derivatives are $C^{\infty}$-bounded. 

Since $\rM$ is locally isometric to a compact space $\rM _0$,
it is a manifold with bounded geometry (see \cite[Appendix 1]{S} for an introduction).
Moreover, $\nabla ^\rE$ is a $C ^\infty$-bounded differential operator,
because by $\rG $ invariance the Christoffel symbols of
$\nabla ^\rE$ and all their derivatives are uniformly bounded.
Using normal coordinate charts and parallel transport with respect to $\nabla ^\rE$ as trivialization,
one sees that $\rE$ is a bundle with bounded geometry.

Since the operators $\dot \nabla ^{\rE _\flat } $ and $ \dot \partial ^{\rV } $ are just respectively the
$(0, 1)$ and $(1, 0)$ parts of the usual covariant derivative operator,
our Definition \ref{SobDfn} is equivalent to the standard Sobolev norm \cite[Appendix 1 (1.3)]{S}
(with $p=2$ and $s$ non-negative integers).

One has the elliptic regularity for these Sobolev spaces:
\begin{lem}
\label{EllReg1}
\cite[Lemma 1.4]{S}
Let $A $ be any $C ^\infty$-bounded, uniformly elliptic, differential operator of order $m$.
For any $i, j\geq 0$, there exists a constant $C$ such that for any $s \in \Gamma ^\infty _c (\rE) $
$$ \| s \| _{i + m} \leq C ( \| A s \| _i + \| s \| _j) .$$
\end{lem}

\begin{rem}
Throughout this paper, by an ``elliptic operator" on a manifold,
we mean elliptic in all directions,
without taking any foliation structure into consideration.
We use the term ``fiber-wise elliptic operators" to refer to differential operators that are fiber-wise and elliptic restricted to fibers.
\end{rem}

\subsection{The fibered product}
\begin{dfn}
The fibered product of the manifold $M$ is 
$$\rM \times _\rB \rM := \{ (p, q) \in \rM \times \rM : \pi (p) = \pi (q) \} ,$$
and with the maps from $\rM \times _\rB \rM $ to $\rM$ defined by
$$ \bs (p, q) := q, \quad \bt (p, q) := p .$$
The manifold $\rM \times _\rB \rM $ is a fiber bundle over $\rB $, with typical fiber $\rZ \times \rZ $.
One naturally has the splitting \cite[Section 2]{Heitsch;FoliationHeat}
$$T (\rM \times _\rB \rM) = \hat \rH \oplus \rV _\bt \oplus \rV _\bs ,$$
where
$$\rV _\bs := \Ker (d \bt) , \quad \rV _\bt := \Ker (d \bs ).$$
\end{dfn}
Note that $\rV _\bs \cong \bs ^{*} \rV$ and $\rV _\bt \cong \bt ^{*} (\rV )$.
As in Section 1.1, we endow $\rM \times _\rB \rM $ with a metric by lifting the metrics on $\rH _{0}$ and $\rV_{0}$.
Then $\rM \times _\rB \rM $ is a manifold with bounded geometry.

\begin{nota}
With some abuse in notations,
we shall often write elements in $\rM \times _\rB  \rM $ as a triple $(x, y, z)$
and $\bs (x, y, z) = (x, z), \bt (x, y, z) = (x, y) \in \rM $,
where $x \in \rB , y , z \in \rZ _x $
\end{nota}

Let $\rG$ act on $\rM \times _\rB \rM $ by the diagonal action
$$ (p, q) g := (p g , q g ). $$
Let $\rE \to \rM $ be a contravariant $\rG$-vector bundle and $\rE '$ be its dual.
We shall consider
$$\hat \rE \to \rM \times _\rB \rM := \bt ^* \rE \otimes \bs ^* \rE ' .$$
Given a $\rG$-invariant connection $\nabla ^\rE $ on $\rE $, let
$$\nabla ^{\hat \rE } := \bt ^* \nabla ^{\rE }\otimes {\rm Id}_{\bs^*E'} + {\rm Id}_{\bt ^*E}\otimes \bs ^* \nabla ^{\rE ' }$$
be the tensor product of the pullback connections.
Fix any local base $\{ e _1, \cdots e _r \} $ of $\rE '$ on some $\rU \subset \rM$,
any section can be written as
$$ s = \sum _{i=1} ^r u _i \otimes \bs ^* e _i$$
on $\bs ^{-1} (\rU)$, where $u _i \in \Gamma ^\infty (\bt ^* \rE )$.
Then by definition we have
\begin{equation}
\nabla _X ^{\hat \rE } \left(\sum _{i=1} ^r u _i \otimes \bs ^* e _i\right)
= \sum _{i=1} ^r (\nabla ^{\bt ^* \rE } _X u _i ) \otimes \bs ^* e _i
+ u _i \otimes \bs ^* (\nabla ^{\rE '} _{d \bs (X)} e _i) ,
\end{equation}
for any vector $X$ on $\rM$.

Similar to Definition \ref{DiffDfn1}, we define the covariant derivative operators on \\
$\Gamma ^\infty (\otimes ^\bullet T ^* \rB \bigotimes \otimes ^\bullet (\rV' _\bt)_\flat
\bigotimes \otimes ^\bullet (\rV' _\bs)_\flat \bigotimes \hat \rE _\flat )$.
\begin{dfn}
Define
\begin{multline}
\nonumber
\left(\dot \nabla ^{\hat \rE _\flat } \psi \right)
(X _0 , X _1 , \cdots ,X _k ; Y _1 , \cdots, Y _l , Z _1 , \cdots ,Z _{l'}) \\
:= \nabla ^{\hat \rE _\flat } _{X _0} \psi
( X _1 , \cdots ,X _k ; Y _1 , \cdots ,Y _l , Z _1 , \cdots, Z _{l'}) \\ 
- \sum _{1 \leq j \leq l} \psi (X _1 , \cdots, X _k ;
Y _1 , \cdots , \nabla ^{\rV _\flat} _{X _0 } Y _j , \cdots , Y _l , Z _1 , \cdots, Z _{l'}) \\ 
- \sum _{1 \leq j \leq l'} \psi (X _1 , \cdots, X _k ;
Y _1 , \cdots , Y _l , Z _1 , \cdots , \nabla ^{\rV _\flat} _{X _0 } Z _j , \cdots , Z _{l'}) \\ 
- \sum _{1 \leq i \leq k} \psi (X _1 , \cdots , \nabla ^{T \rB } _{X _0 } X _i , \cdots , X _k ; Y _1 , \cdots Y _l
, Z _1 , \cdots, Z _{l'}), 
\end{multline}
\begin{multline}
\nonumber
\left(\dot \partial ^{\bs } \psi\right) (X _1 , \cdots, X _k ; Y _0 , Y _1 , \cdots, Y _l , Z _1 , \cdots, Z _{l'}) \\
:=\nabla ^{\hat \rE} _{Y _0} \psi ( X _1 , \cdots, X _k ; Y _1 , \cdots ,Y _l , Z _1 , \cdots ,Z _{l'}) \\ 
- \sum _{1 \leq j \leq l} \psi (X _1 , \cdots ,X _k ;
Y _1 , \cdots , P ^{\rV ^\bs} (\nabla ^{T \rM} _{Y _0 } Y _j) , \cdots , Y _l , Z _1 , \cdots, Z _{l'}) \\ 
- \sum _{1 \leq j \leq l'} \psi (X _1 , \cdots ,X _k ;
Y _1 , \cdots ,Y _l , Z _1 , \cdots , P ^{\rV ^\bt} [Y _0 , Z _j] , \cdots , Z _{l'}), \\ 
\end{multline}
\begin{multline}
\nonumber
\left(\dot \partial ^{\bt } \psi \right)(X _1 , \cdots, X _k ;  Y _1 , \cdots ,Y _l , Z _0 , Z _1 , \cdots, Z _{l'}) \\
:= \nabla ^{\hat \rE} _{Y _0} \psi ( X _1 , \cdots, X _k ; Y _1 , \cdots ,Y _l , Z _0 , Z _1 , \cdots, Z _{l'})\\ 
- \sum _{1 \leq j \leq l} \psi (X _1 , \cdots, X _k ;
Y _1 , \cdots , P ^{\rV ^\bs} [Z _0 , Y _j] , \cdots , Y _l , Z _1 , \cdots, Z _{l'}) \\ 
- \sum _{1 \leq j \leq l'} \psi (X _1 , \cdots ,X _k ;
Y _1 , \cdots ,Y _l , Z _1 , \cdots , P ^{\rV ^\bt} (\nabla ^{T \rM} _{Z _0 } Z _j) , \cdots , Z _{l'}).
\end{multline}
\end{dfn}
Given any vector fields $Y, Z \in \rV$.
Let $Y ^\bs , Z ^\bt $ be respectively the lifts of $Y $ and $Z $ to $\rV _\bs $ and $\rV _\bt$.
Then $[Y ^\bs , Z ^\bt ] = 0 $.
It follows that as differential operators,
$$ [\dot \partial ^\bs , \dot \partial ^\bt ] = 0 .$$
Also, it is straightforward to verify that
$$ [\dot \nabla ^{\hat \rE _\flat} , \dot \partial ^\bs ]
\text{ \and } [\dot \nabla ^{\hat \rE _\flat} , \dot \partial ^\bt ] $$
are both zeroth order differential operators (i.e. smooth bundle maps).

Fix a local trivialization
$$ \bx _\alpha : \pi ^{-1} (\rB _\alpha ) \to \rB _\alpha \times \rZ , \quad
p \mapsto (\pi (p) , \varphi ^\alpha (p)),$$
where $\rB = \bigcup _{\alpha } \rB _\alpha $ is a finite open cover (since $B$ is compact), 
and $\varphi ^\alpha |_{\pi ^{-1} (x)} : \rZ _x \to \rZ $ is a diffeomorphism.
Such a trivialization induces a local trivialization of the fiber bundle $\rM \times _\rB \rM \xrightarrow{\bt} \rM $ by
$\rM = \bigcup \rM _{\alpha } , \rM _\alpha := \pi ^{-1} (\rB _\alpha ) $,
$$ \hat \bx _\alpha : \bt ^{-1} (\rM _\alpha ) \to \rM _\alpha \times \rZ , \quad
(p, q) \mapsto ( p , \varphi ^\alpha (q)).$$
On $\rM _\alpha \times \rZ$ the source and target maps are explicitly given by
\begin{equation}
\label{LocalGpoid}
\bs \circ (\hat \bx _\alpha) ^{-1} (p, z) = (\bx _\alpha ) ^{-1} (\pi (p) , z)
\text{ and } \bt \circ (\hat \bx _\alpha) ^{-1} (p, z) = p .
\end{equation}

For such trivialization, one has the natural splitting
$$ T (\rM _\alpha \times \rZ) = \rH ^\alpha \oplus \rV ^\alpha \oplus T \rZ ,$$
where $\rH ^\alpha $ and $ \rV ^\alpha $ are respectively $\rH $ and $\rV$ restricted to $\rM _\alpha \times \{ z \}$,
$z \in \rZ $.
It follows from (\ref{LocalGpoid}) that
$$\rV ^\alpha = d \hat \bx _\alpha (\rV _\bs), \quad T \rZ = d \hat \bx _\alpha (\rV _\bt) .$$
Given any vector field $X$ on $\rB$,
let $X ^\rH , X ^{\hat \rH}$ be respectively the lifts of $X$ to $\rH $ and $\hat \rH $.
Since $d \bt (X ^{\hat \rH} ) = d \bs (X ^{\hat \rH} ) = X ^\rH $,
it follows that
$$d \hat \bx _\alpha (X ^{\hat \rH }) = X ^{\rH ^\alpha } + d \varphi ^\alpha (X ^\rH ).$$
Note that $d \varphi ^\alpha (X ^\rH ) \in T \rZ \subseteq T (\rM _\alpha \times \rZ) $.

Corresponding to the splitting $T (\rM _\alpha \times \rZ) = \rH ^\alpha \oplus \rV ^\alpha \oplus T \rZ$,
one can define the covariant derivative operators.
Let $\nabla ^{T \rM _\alpha } $ be the Levi-Civita connection on $\rM_\alpha$
and $\nabla^{TZ}$ be the Levi-Civita connection on $\rZ$.
Define for any smooth section
$\phi \in \Gamma ^\infty (\otimes ^\bullet T ^* \rB \bigotimes \otimes ^\bullet (\rV ^\alpha)' _\flat
\bigotimes \otimes ^\bullet T ^* \rZ _\flat \bigotimes (\hat \bx _\alpha ^{-1} )^* \hat \rE _\flat )$,
\begin{multline}
\nonumber
\left(\dot \nabla ^{\alpha } \phi \right)
(X _0 , X _1 , \cdots,  X _k ; Y _1 , \cdots, Y _l , Z _1 , \cdots ,Z _{l'}) \\
:= (\bx _\alpha ^* \nabla ^{\hat \rE _\flat }) _{X ^{\rH ^\alpha } _0}
\phi ( X _1 , \cdots ,X _k ; Y _1 , \cdots, Y _l , Z _1 , \cdots ,Z _{l'}) \\ 
- \sum _{1 \leq j \leq l} \phi (X _1 , \cdots ,X _k ; Y _1 , \cdots , [X _0 ^{\rH ^\alpha} , Y _j] , \cdots , Y _l ,
, Z _1 , \cdots, Z _{l'}) \\ 
- \sum _{1 \leq j \leq l'} \phi (X _1 , \cdots, X _k ;
Y _1 , \cdots , Y _l , Z _1 , \cdots , [X _0 ^{\rH ^\alpha} Z _j ], \cdots , Z _{l'}) \\ 
- \sum _{1 \leq i \leq k} \phi (X _1 , \cdots , \nabla ^{T \rB } _{X _0 } X _i , \cdots , X _k ;
Y _1 , \cdots ,Y _l , Z _1 , \cdots, Z _{l'}), 
\end{multline}
\begin{multline}
\nonumber
\left(\dot \partial ^{\alpha } \phi\right) (X _1 , \cdots, X _k ; Y _0 , Y _1 , \cdots ,Y _l , Z _1 , \cdots ,Z _{l'}) \\
:=(\bx _\alpha ^* \nabla ^{\hat \rE _\flat}) _{Y _0} \phi ( X _1 , \cdots, X _k ;
Y _1 , \cdots ,Y _l , Z _1 , \cdots, Z _{l'}) \\ 
- \sum _{1 \leq j \leq l} \phi (X _1 , \cdots ,X _k ;
Y _1 , \cdots , P ^{\rV ^\alpha} (\nabla ^{T \rM _\alpha}_{Y _0 } Y _j), \cdots, Y _l, Z _1 , \cdots ,Z _{l'})\\ 
- \sum _{1 \leq j \leq l'} \phi (X _1 , \cdots ,X _k ;
Y _1 , \cdots ,Y _l , Z _1 , \cdots , P ^{T \rZ} [Y _0 , Z _j] , \cdots , Z _{l'}), 
\end{multline}
\begin{multline}
\nonumber
\left(\dot \partial ^{\rZ } \phi \right)(X _1 , \cdots , X _k ; Y _1 , \cdots ,Y _l , Z _0 , Z _1 , \cdots, Z _{l'}) \\
:=(\bx _\alpha ^* \nabla ^{\hat \rE _\flat} )_{Z _0} \phi ( X _1 , \cdots ,X _k ;
Y _1 , \cdots ,Y _l , Z _0 , Z _1 , \cdots, Z _{l'}) \\ 
- \sum _{1 \leq j \leq l} \phi (X _1 , \cdots, X _k ;
Y _1 , \cdots , P ^{\rV ^\alpha} [Z _0 , Y _j] , \cdots , Y _l , Z _1 , \cdots ,Z _{l'}) \\ 
- \sum _{1 \leq j \leq l'} \phi (X _1 , \cdots, X _k ;
Y _1 , \cdots ,Y _l , Z _1 , \cdots , \nabla ^{T \rZ} _{Z _0 } Z _j , \cdots , Z _{l'}).
\end{multline}

Consider the special case when $\phi = u \otimes \bs ^* e$,
where
$u \in \Gamma ^\infty (\otimes ^\bullet T ^* \rB \bigotimes \otimes ^\bullet (\rV ^\alpha)' _\flat \otimes \bt ^* \rE)$,
$e \in \Gamma ^\infty (\otimes ^\bullet T ^* \rZ _\flat \otimes \rE' )$.
\begin{lem}
\label{TensorD}
For $(x, y, z) \in \rM _\alpha \times \rZ$, one has
\begin{align*}
\dot \nabla ^\alpha (u \otimes \bs ^* e) (x, y, z)
=(\dot \nabla ^{\rE _\flat} u |_{\rM _\alpha \times \{ z \}} (x, y)) \otimes \bs ^* (e (x, z))
+ u \otimes \bs ^* (\nabla ^{\rE' _\flat} e (x, z))
\end{align*}
and 
\begin{align*}
\dot \partial ^{\alpha } (u \otimes \bs ^* e ) (x, y, z)
=(\dot \partial ^\rV u |_{\rM _\alpha \times \{ z \}} (x, y)) \otimes \bs ^* (e (x, z)).
\end{align*}
\end{lem}
\begin{proof}
It suffices to consider the case when $Y _j, Z _{j'}$ are respectively vector fields on $\rM _\alpha $ and $\rZ $
lifted to $\rM _\alpha \times \rZ $.
From this assumption it follows that
$[ Y _j , Z _{j'}] = [X _0 ^{\rH ^\alpha } , Z _{j'} ] = 0$.
The lemma follows by a simple computation.
\end{proof}

We express the (pullback of) the covariant derivatives
$\dot \nabla ^{\hat \rE \flat} \psi, \dot \partial ^\bs \psi$ and $ \dot \partial ^\bt \psi$
in terms of $\dot \nabla ^\alpha \psi ^\alpha$, $\dot \partial ^{\alpha} \psi ^\alpha$
and $\dot \partial ^\rZ \psi ^\alpha$, where $\psi ^\alpha := (\bx _\alpha ^{-1} )^* \psi $.
One directly verifies
\begin{multline}
\left(\dot \nabla ^{\rE _\flat } \psi \right)(X _0 , X _1 , \cdots , X _k ; Y _1 , \cdots, Y _l , Z _1 , \cdots , Z _{l'})\\ 
= (\bx _\alpha ^{-1} )^* \left(  \nabla ^\alpha _{(X ^{\rH ^\alpha} _0 + d \varphi ^\alpha (X ^{\rH } _0 ))}
\psi ^\alpha ( X _1, \cdots , X _k ; d \bx _\alpha (Y _1 , \cdots, Y _l , Z _1 , \cdots , Z _{l'}))\right. \\
- \sum _{1 \leq j \leq l}  \psi ^\alpha \left(X _1 , \cdots, X _k ;
d \bx _\alpha Y _1 , \cdots , [X _0 ^{\rH ^\alpha }, d \bx _\alpha Y _j ],
\cdots , d \bx _\alpha Y _l , d \bx _\alpha (Z _1 , \cdots , Z _{l'})\right) \\ 
- \sum _{1 \leq j \leq l'}  \psi ^\alpha (X _1 , \cdots ,X _k ;
d \bx _\alpha (Y _1 , \cdots , Y _l) , d \bx _\alpha Z _1 , 
 \cdots , [ X _0 ^{\rH ^\alpha} + d \varphi ^\alpha (X ^{\rH } _0 ) , d \bx _\alpha Z _j ] ,
\cdots , d \bx _\alpha Z _{l'}) \\ 
- \left.\sum _{1 \leq i \leq k}  \psi ^\alpha (X _1 , \cdots , \nabla ^{T \rB } _{X _0 } X _i , \cdots , X _k ;
Y _1 , \cdots , Y _l , Z _1 , \cdots , Z _{l'}) \right) \\ 
= (\bx _\alpha ^{-1} )^* \left(  \dot \nabla ^\alpha \psi ^\alpha (X _0 , X _1 , \cdots , X _k ;
Y _1 , \cdots, Y _l , Z _1 , \cdots , Z _{l'})\right. \\ 
+ \dot \partial ^\rZ \psi ^\alpha ( X _1 , \cdots , X _k ;
Y _1 , \cdots, Y _l , d \varphi ^\alpha (X ^{\rH } _0 ), Z _1 , \cdots , Z _{l'}) \\ 
+ \sum _{1 \leq j \leq l'}  \psi ^\alpha \left(X _1 , \cdots, X _k ;
d \bx _\alpha (Y _1 , \cdots , Y _l) , d \bx _\alpha Z _1 , 
 \cdots , (\nabla ^{T \rZ} d \varphi ^\alpha (X ^{\rH } _0 )) ( d \bx _\alpha Z _j ) ,
\cdots , d \bx _\alpha Z _{l'}) \right).
\end{multline}
By similar computations for $\dot \partial ^\bs $ and $\dot \partial ^\bt $, one gets:
\begin{multline}
\left(\dot \partial ^\bs \psi \right)(X _1, \cdots , X _k ; Y _0 , Y _1 , \cdots Y _l , Z _1 , \cdots Z _{l'}) \\ 
= (\bx _\alpha ^{-1} )^*  \big(\dot \partial ^\alpha \psi ^\alpha
\label{Convert3}
(X _1, \cdots, X _k ; d \bx _\alpha (Y _0 , Y _1 , \cdots ,Y _l , Z _1 , \cdots, Z _{l'}) \big), 
\end{multline}
\begin{multline}
\left(\dot \partial ^\bt \psi \right)(X _1, \cdots , X _k ; Y _1 , \cdots, Y _l , Z _0 , Z _1 , \cdots, Z _{l'}) \\ 
= (\bx _\alpha ^{-1} )^*  \big(\dot \partial ^\rZ
\psi ^\alpha ( X _1, \cdots ,X _k ; d \bx _\alpha (Y _1 , \cdots ,Y _l , Z _0 , \cdots ,Z _{l'})) \\ 
+ \sum _{1 \leq j \leq l'}  \psi ^\alpha (X _1 , \cdots, X _k ;
d \bx _\alpha (Y _1 , \cdots ,Y _l ) , d \bx _\alpha Z _1 , \\ 
 \cdots ,
(\nabla ^{T \rZ} _{d \bx _\alpha Z _0 } d \bx _\alpha Z _j
- d \bx _\alpha ( P ^{\rV _\bt} \nabla ^{T \rM } _{Z _0} Z _j),
\cdots , d \bx _\alpha Z _{l'}) \big).
\end{multline}

\subsection{Smoothing operators}
For any $(x, y, z) \in \rM \times _\rB \rM $,
let $\bd (x, y, z) $ be the Riemannian distance between $y, z \in \rZ _x$.
We regard $\bd $ as a continuous, non-negative function on $\rM \times_\rB \rM$.

\begin{dfn}
\label{NWX}
(See  \cite{NWX;GroupoidPdO}). As a vector space,
$$\Psi ^{- \infty } _\infty (\rM \times _\rB \rM , \rE ) :=
\left\{
\begin{array}{ll}
& \text{For any } m \in \bbN, \varepsilon > 0 , \exists C _m > 0  \\
\psi \in \Gamma ^\infty (\hat \rE ) : &  \text{such that } \Forall i+j+k \leq m, \\
& |(\dot \nabla ^{\hat \rE _\flat } )^i (\dot \partial ^{\rV _\bs} )^j (\dot \partial ^{\rV _\bt} )^k \psi |
\leq C _m e ^{- \varepsilon \bd}.
\end{array}
\right\}.
$$
The convolution product structure on $\Psi ^{- \infty } _\infty (\rM \times _\rB \rM , \rE ) $ is defined by
$$ \psi _1 \star \psi _2 (x, y, z)
:= \int _{\rZ _x } \psi _1 (x, y, w) \psi _2 (x, w , z ) \mu _{x } (w) .$$
\end{dfn}

We introduce a Sobolev type generalization of the Hilbert-Schmit norm on
$\Psi ^{- \infty } _\infty (\rM \times _\rB \rM , \rE ) ^\rG $, the space of $\rG$-invariant kernels. Since $G$ is a finitely generated discrete group and acts on $M$ freely, properly discontinuously, then there exists
a smooth compactly supported function $\chi \in C ^\infty _c (\rM )$,
such that
$$ \sum _{g \in \rG } g ^* \chi = 1 .$$

In particular, one may construct $\chi$ as follows.
Denote by $\pi _\rG$ the projection $\rM \to \rM _0 = \rM / \rG$.
There exists some $r > 0$, and a finite collection of geodesic balls $B (p _\alpha , r)$ of radius $r$,
such that $B (p _\alpha , r)$ is diffeomorphic to its image in $\rM _0$ under $\pi _\rG$,
and moreover $\{ B (p _\alpha , \frac{r}{3}) \} $ covers $\rM _0$ (since $\rM _0$ is compact).
Since $\rG$ acts on $\rM $ by isometry, 
$\pi _\rG (B (p _\alpha g , r)) = \pi _\rG (B (p _\alpha , r))$ for all $g \in \rG$.
Thus one may without loss of generality assume that $B (p _\alpha , r)$ are mutually disjoint.

Define the functions $f \in C ^\infty (\bbR), F _\alpha , F \in C ^\infty _c (\rM ) $ by
\begin{align*}
f (t) :=& e ^{- \frac{1}{t ^2}} \text { if } t > 0 , \quad 0 \text { if } t \leq 0 , \\
F _\alpha (p) :=& f \big( 1 - \frac{2 \bd (p , p _\alpha )}{r} \big)
\Big( f \big( \frac{3 \bd (p , p _\alpha )}{r} - 1 \big) 
+ f \big( 1 - \frac{2 \bd (p , p _\alpha )}{r} \big) \Big) ^{-1},
\quad p \in \rM ,\\
F :=& \sum _\alpha F _\alpha .
\end{align*}
Note that $F $ is well defined because $ F _\alpha $ is supported on $B (p _\alpha , r)$,
which is locally finite.
Since by construction 
$$ \big \{ \bigcup _\alpha B (p _\alpha g , \frac {r}{3}) \big \} _{g \in \rG}$$
is a locally finite cover of $\rM $,
$ \sum _g g ^* F $ is also well defined.
Define 
$$ \chi := F ( \sum _g g ^* F ) ^{-1} .$$
Then clearly $\chi $ is the required partition of unity.
Moreover, observe that $\chi ^{\frac{1}{2}}$ is a smooth function because $f ^{\frac{1}{2}}$ is smooth and
all denominators are uniformly bounded away from $0$.

For any $\rG$-invariant $\psi \in \Psi ^{- \infty} _\infty (\rM \times _\rB \rM , \rE ) ^\rG$,
recall that the standard trace of $\psi $ is
$$ \tr _\Psi (\psi ) (x) := \int _{z \in \rZ _x} \chi (x, z) \tr (\psi (x, z, z)) \mu _x (z)
\in C ^\infty (\rB) .$$
The definition does not depend on the choice of $\chi$.
The corresponding Hilbert-Schmit norm is
\begin{align}
\label{0HS}
\int _\rB \big( & \tr _\Psi (\psi \psi ^* ) (x) \big) ^2 \mu _\rB (x) \\ \nonumber
&= \int _\rB \int _{\rZ _x} \chi (x, z) \int _{\rZ _x} \tr (\psi (x, z, y) \psi ^* (x, y, z))
\mu _x (y) \mu _x (z) \mu _\rB (x).
\end{align}
Note that Equation \eqref{0HS} coincides with the $L ^2$-norm of $\psi$.
Generalizing Equation \eqref{0HS} to taking into account derivatives, we define:
\begin{dfn}
The $m$-th Hilbert-Schmit norm on $\Psi ^{- \infty } (\rM \times _\rB \rM , \rE ) ^\rG $ is defined to be
\begin{align*}
\| \psi \| ^2 _{\HS m}
:= \sum _{i+j+k \leq m} \int _\rB \int _{\rZ _x } \chi (x, z) \int _{\rZ _x } \big|
(\dot \nabla ^{\hat \rE _\flat } )^i (\dot \partial ^\bs )^j (\dot \partial ^\bt)^k \psi \big| ^2
& (x, y, z) \mu _x (y) \mu _x (z) \mu _\rB (x),
\end{align*}
for any $\rG$-invariant element $\psi $.
Let $\bar \Psi ^{-\infty} _m (\rM \times _\rB \rM , \rE ) ^\rG$ be the completion of
$ \Psi ^{-\infty} _\infty (\rM \times _\rB \rM , \rE ) ^\rG$ with respect to $\| \cdot \| _{\HS m } $.
\end{dfn}

Similar to Lemma \ref{EllReg1}, one has the elliptic regularity for the Hilbert-Schmit norm:
\begin{lem}
\label{EllReg2}
Let $A$ be a $\rG$-invariant, first order elliptic differential operator,
then for any $m = 0, 1, \cdots$, there exists a constant $C > 0$ such that
$$ \| \psi \| _{\HS m+1} \leq C (\| A \psi \| _{\HS m} + \| \psi \| _m ),$$
for all $\psi \in \Psi ^{- \infty } (\rM \times _\rB \rM , \rE ) ^\rG $.
\end{lem}
\begin{proof}
Define
$$ S := \{ g \in \rG : \chi (g ^* \chi ) \neq 0 \}.$$
Then $S$ is finite because $\{ g ^* \chi \} $ is a locally finite partition of unity.

Consider $(\chi (x, z))^{\frac{1}{2}} \psi $.
By Leibniz rule, one has
$$ (\dot \nabla ^{\hat \rE _\flat } )^i (\dot \partial ^\bs )^j (\dot \partial ^\bt)^k
\chi ^{\frac{1}{2}} \psi
= \chi ^{\frac{1}{2}} (\dot \nabla ^{\hat \rE _\flat } )^i (\dot \partial ^\bs )^j (\dot \partial ^\bt)^k \psi $$
modulo terms involving lower derivatives in $\psi$.
Since $(\chi (x, z))^{\frac{1}{2}} $ is smooth with bounded derivatives,
there exists some $C _1 > 0$ such that for any $(x, y, z) \in \rM \times _\rB \rM$,
\begin{align}
\label{LeibEst1}
\Big|
\sum _{i+j+k \leq m} & \big|
(\dot \nabla ^{\hat \rE _\flat } )^i (\dot \partial ^\bs )^j (\dot \partial ^\bt)^k
\chi ^{\frac{1}{2}} \psi \big| ^2
- \chi \sum _{i+j+k \leq m} \big| (\dot \nabla ^{\hat \rE _\flat } )^i
(\dot \partial ^\bs )^j (\dot \partial ^\bt)^k \psi \big| ^2 \Big| (x, y, z) \\ \nonumber
&\leq \sum _{g \in S} g ^* \chi \Big( C _1 \sum _{i+j+k \leq m-1} \big|
(\dot \nabla ^{\hat \rE _\flat } )^i (\dot \partial ^\bs )^j (\dot \partial ^\bt)^k \psi \big| ^2 \Big) (x, y, z).
\end{align}
Similarly, since $A \chi ^{\frac{1}{2}} - \chi ^{\frac{1}{2}} A $ is a $C ^\infty$-bounded tensor,
one has
\begin{align}
\label{LeibEst2}
\Big|
\sum _{i+j+k \leq m} & \big|
(\dot \nabla ^{\hat \rE _\flat } )^i (\dot \partial ^\bs )^j (\dot \partial ^\bt)^k
(A \chi ^{\frac{1}{2}} \psi ) \big| ^2
- \chi \sum _{i+j+k \leq m} \big| (\dot \nabla ^{\hat \rE _\flat } )^i
(\dot \partial ^\bs )^j (\dot \partial ^\bt)^k A \psi \big| ^2 \Big| \\ \nonumber
&\leq \sum _{g \in S} g ^* \chi \Big( C _2 \sum _{i+j+k \leq m} \big|
(\dot \nabla ^{\hat \rE _\flat } )^i (\dot \partial ^\bs )^j (\dot \partial ^\bt)^k \psi \big| ^2 \Big) .
\end{align}
Since the integrand is $\rG$-invariant, for any $g \in \rG$
$$ \int _{\rM \times _\rB \rM }
g ^* \chi \sum _{i+j+k \leq m-1} \big|
(\dot \nabla ^{\hat \rE _\flat } )^i (\dot \partial ^\bs )^j (\dot \partial ^\bt)^k \psi \big| ^2
\mu _x (y) \mu _x (z) \mu _\rB (x) = \| \psi \| ^2 _{\HS m-1} .$$
Observe that $A$ being $\rG$-invariant implies $A$ is uniformly elliptic and $C ^\infty$-bounded.
Therefore applying Lemma \ref{EllReg1} for $(\chi (x, z))^{\frac{1}{2}} \psi $, there exists constant $C_3>0$ such that
\begin{align*}
\int _{\rM \times _\rB \rM } & \sum _{i+j+k \leq m+1} \big|
(\dot \nabla ^{\hat \rE _\flat } )^i (\dot \partial ^\bs )^j (\dot \partial ^\bt)^k (\chi ^{\frac{1}{2}} \psi ) \big| ^2
\mu _x (y) \mu _x (z) \mu _\rB (x) \\
\leq & C _3 \Big( \int _{\rM \times _\rB \rM } \sum _{i+j+k \leq m} \big|
(\dot \nabla ^{\hat \rE _\flat } )^i (\dot \partial ^\bs )^j (\dot \partial ^\bt)^k (A \chi ^{\frac{1}{2}} \psi ) \big|^2
\mu _x (y) \mu _x (z) \mu _\rB (x) \\
&+ \int _{\rM \times _\rB \rM } \sum _{i+j+k \leq m} \big|
(\dot \nabla ^{\hat \rE _\flat } )^i (\dot \partial ^\bs )^j (\dot \partial ^\bt)^k (\chi ^{\frac{1}{2}} \psi ) \big|^2
\mu _x (y) \mu _x (z) \mu _\rB (x) \Big).
\end{align*}
Then by Equations \eqref{LeibEst1} and \eqref{LeibEst2}, we get the lemma.
\end{proof}

\subsection{Fiber-wise operators}
We turn to consider another class of operators and a different norm.
\begin{dfn}
A fiber-wise operator is a linear operator $A : \Gamma ^\infty _c (\rE _\flat) \to \cW ^0 (\rE)$
such that for all $x \in \rB$,
and any sections $s _1, s _2 \in \Gamma ^\infty _c (\rE _\flat )$,
$$ (A s _1 )(x) = (A s _2) (x), $$
whenever $s _1 (x) = s _2 (x) $.

We say that $A $ is smooth if
$A (\Gamma ^\infty _c (\rE )) \subseteq \Gamma ^\infty (\rE )$.
A smooth fiber-wise operator $A $ is said to be bounded of order $m$
if $A $ extends to a bounded map from $\cW ^m (\rE )$ to itself.

Denote the operator norm of $A : \cW ^m (\rE ) \to \cW ^m (\rE ) $ by $\| A \| _{\op m} $.
\end{dfn}

\begin{exam}
\label{SmoothingExam}
An example of smooth fiber-wise operators are $\Psi ^{- \infty } _\infty (\rM \times _\rB \rM , \rE )$,
acting on $\cW ^m (\rE )$ by vector representation, i.e.
$$ \big( \varPsi s \big) (x, y) := \int _{\rZ _x} \psi (x, y, z ) s (x, z) \mu _x (z) .$$
\end{exam}

\begin{nota}
For the fiber-wise operator $A : \Gamma ^\infty _c (\rE _\flat) \to \cW ^0 (\rE)$ which is of the form given by Example
\ref{SmoothingExam}, we denote its kernel by $A (x, y, z)$.
We shall write
$$ \| A \| _{\HS m} := \| A (x, y, z) \| _{\HS m} ,$$
provided $A (x, y, z) \in \bar \Psi ^{-\infty} _m (\rM \times _\rB \rM , \rE )$.
\end{nota}

The following lemma enables one to construct more fiber-wise operators:
\begin{lem}
Let $A$ be any first order, $C ^\infty$-bounded differential operator on $\rM $ and
$\varPsi \in \Psi ^{- \infty } _\infty (\rM \times _\rB \rM , \rE )$ be as in Example \ref{SmoothingExam}.
Then $[A, \varPsi ] $ is a fiber-wise operator in $\Psi ^{- \infty } _\infty (\rM \times _\rB \rM , \rE )$.
\end{lem}
\begin{proof}
Since multiplication by a tensor or differentiation along $\rV$ is fiber-wise,
it remains to consider operators of the form $\nabla ^{\rE} _{X ^\rH} $, for some vector field $X $ on $\rB$. Let $L^{\nabla^{E}}_{X^{H}}=d^{\nabla^{E}}i_{X^{H}}+i_{X^{H}}d^{\nabla^{E}}$, where $d^{\nabla^{E}}$ is the twisted de Rham operator.  In the following of this paper, the Lie derivatives are all defined in this way.

Let $s \in \Gamma ^\infty _c (\rE)$ be arbitrary.
We first suppose that $\rZ$ is orientable and $\mu _x$ is a volume form.
By the decay condition in Definition \ref{NWX}, one can differentiate under the integral sign to get
\begin{align*}
A \varPsi s (x, z)
=& \int _{\rZ _x} L ^{\nabla ^{\hat \rE }} _{X ^{\hat \rH}} (\psi (x, y, z) s (x, y) \mu _x (y)) \\
=& \int _{\rZ _x} \big( L ^{\nabla ^{\hat \rE }} _{X ^{\hat \rH}} \psi (x, y, z) \big) s (x, y) \mu _x (y)
+ \int _{\rZ _x} \psi (x, y, z) \big( L ^{\nabla ^{\rE }} _{X ^\rH} s (x, y) \big) \mu _x (y) \\
&+ \int _{\rZ _x} \psi (x, y, z) s (x, y) (L ^{\nabla ^{\rE }} _{X ^\rH} \mu _x (y)).
\end{align*}
The second term in the last line is just $\varPsi A s $. Hence the result.

For the general case, one can take a suitable partition of unity and integrate over local volume forms,
then one obtains a similar equation.
\end{proof}

Let $A $ be a smooth fiber-wise operator on $\Gamma ^\infty _c (\rE _\flat )$.
Then $A $ induces a fiber-wise operator $\hat A$ on $\Gamma ^\infty _c (\hat \rE _\flat)$ by
\begin{equation}
\label{FiberwiseOp}
\hat A (u \otimes \bs ^* e) := A ( u |_{ \rM _\alpha \times \{ z \}}) \otimes (\bs ^* e)
\end{equation}
on $\bt ^{-1} (\rM _\alpha ) \cong \rM _\alpha \times \rZ $,
for any sections
$e \in \Gamma ^\infty (\rE ') , u \in \Gamma ^\infty (\bt ^* \rE ) $
and $\psi = u \otimes \bs ^* e \in \Gamma ^{ \infty } _c (\hat \rE ) $.

Note that $\hat A$ is independent of trivialization since $A$ is fiber-wise,
and for any $\alpha , \beta $ and $z \in \rZ$,
the transition function $\bx _\beta \circ (\bx _\alpha ) ^{-1}$ maps the sub-manifold
$\rZ _x \times \{ z \} $ to $\rZ _x \times \{ \varphi ^\beta _x \circ (\varphi ^\alpha _x) ^{-1} (z) \}$
as the identity diffeomorphism.

\subsection{The main theorem}
Suppose that $A $ is smooth and bounded of order $m$ for all $m \in \bbN$.
Consider the covariant derivatives of $\hat A \psi ^\alpha$.

\begin{thm}
\label{AEstCor1}
For any smooth bounded $\rG$-invariant operator $A $, there exist constants $C' _{1,1} , C' _{0, 0} > 0 $ such that for any  $\psi \in \Psi ^{- \infty} _\infty (\rM \times _\rB \rM ) ^\rG $, one has 
$\hat A \psi \in \Psi ^{- \infty} _\infty (\rM \times _\rB \rM ) ^\rG $ and 
$$ \| \hat A \psi \| _{\HS 1} \leq (C ' _{1,1} \| A \|_{\op 1} + C ' _{1,0} \| A \| _{\op 0}) \| \psi \| _{\HS 1}.$$

\end{thm}

\begin{proof}
Fix a partition of unity $\{ \theta _\alpha \} \in C ^\infty _c (\rB)$ subordinate to $\{ \rB _\alpha \}$.
We still denote by $\{ \theta _\alpha \}$ its pullback to $\rM$ and $\rM \times _\rB \rM $.
Fix any Riemannian metric on $\rZ$ and denote the corresponding Riamannian measure by $\mu _\rZ$.
Then one writes
$$ (\hat \bx _\alpha ) _\star (\mu _x \mu _\rB ) = J _\alpha \mu _\rB \mu _\rZ ,$$
for some smooth positive function $J _\alpha $.
Moreover, over any compact subsets on $\rB _\alpha \times \rZ$, $ \frac{1}{J _\alpha} $ is bounded.

Given any $\psi \in \Psi ^{-\infty} _\infty (\rM \times _\rB \rM) ^\rG$,
let $\psi ^\alpha := \hat \bx _\alpha ^* (\psi ) $.
The theorem clearly follows from the inequalities
\begin{align}
\label{EstLem1}
\int _{\rB _\alpha} \int _{\rZ _x} \chi (x, z) \int _{\rZ _x}
| \dot \nabla ^\alpha \hat A (\theta _\alpha \psi ^\alpha ) | ^2 &
\mu _x (y) \mu _x (z) \mu _\rB (x) \\ \nonumber
\leq & (C _1 \| A \| ^2 _{\op 1} + C _2 \| A \| ^2 _{\op 0} ) \| \psi \| ^2 _{\HS 1}, \\
\label{EstLem2}
\int _{\rB _\alpha} \int _{\rZ _x} \chi (x, z) \int _{\rZ _x}
| \dot \partial ^\alpha \hat A (\theta _\alpha \psi ^\alpha ) | ^2 &
\mu _x (y) \mu _x (z) \mu _\rB (x) \\ \nonumber
\leq & (C _1 \| A \| ^2 _{\op 1} + C _2 \| A \| ^2 _{\op 0} ) \| \psi \| ^2 _{\HS 1}, \\
\label{EstLem3}
\int _\rB \int _{\rZ _x} \chi (x, z) \int _{y \in \rZ _x}
| \dot \partial ^\rZ \hat A (\theta _\alpha \psi ^\alpha ) | ^2 &
\mu _x (y) \mu _x (z) \mu _\rB (x)
\leq \| A \| ^2 _{\op 0} \| \psi \| ^2 _{\HS 1} .
\end{align}

Let $\rZ = \bigcup _\lambda \rZ _\lambda $ be a locally finite cover.
Then the support of $\chi \theta _\alpha $ lies in some finite sub-cover.
Let $\chi _\alpha $ be the characteristic function
$$ \chi _\alpha (x, z) = 1 \text{ if } (\chi \theta _\alpha ) (x, z) > 0, \quad 0 \text{ otherwise.}$$
Without loss of generality we may assume $\rE' | _{\rZ _\lambda } $ are all trivial.
For each $\lambda $ fix an orthonormal basis $\{ e ^\lambda _r \}$ of $\rE ' |_{ \rB _\alpha \times \rZ _\lambda }$,
and write
$$\psi ^\alpha := \sum _r u ^\lambda _r \otimes \bs ^* e ^\lambda _r .$$
Using Lemma \ref{TensorD} one estimates the integrand of the l.h.s. of Equation \eqref{EstLem1}, then there exits constant $C_3>0$ such that
\begin{align*}
\Big| \dot \nabla ^\alpha (\hat A \theta _\alpha \psi ^\alpha ) & \Big| ^2 (x, y, z) \\
=& \Big| \sum _r (\dot \nabla ^{\rE _\flat} A \theta _\alpha (u ^\lambda _r |_{\rM _\alpha \times \{ z \}})
(x, y)) \otimes \bs ^* e ^\lambda _r
+ (A \theta _\alpha u ^\lambda _r )\otimes \bs ^* (\nabla ^\rE e ^\lambda _r ) \Big| ^2 \\
\leq & C _3 \sum _r
\Big( \Big| \dot \nabla ^{\rE _\flat} A \theta _\alpha (u ^\lambda _r |_{\rM _\alpha \times \{ z \}})
(x, y) \Big| ^2
+ \Big| (A \theta _\alpha u ^\lambda _r ) \otimes \bs ^* (\nabla ^\rE e ^\lambda _r ) \Big|^2
\Big).
\end{align*}
By integrating, one gets for some constants $C _q$, $q=4,\cdots,10$, that
\begin{align*}
\int _{\rB _\alpha} \int _{\rZ _x} \chi (x, z) \int _{\rZ _x} &
| \dot \nabla ^\alpha \hat A (\theta _\alpha \psi ^\alpha ) | ^2
\mu _x (y) \mu _x (z) \mu _\rB (x) \\
\leq C _4 \sum _\lambda  \int _{\rZ _\lambda} \int _{\rB _\alpha} & \int _{\rZ _x}
\sum _r \Big( \Big| \dot \nabla ^{\rE _\flat} A \theta _\alpha (u ^\lambda _r |_{\rM _\alpha \times \{ z \}})
(x, y) \Big| ^2 \\
&+ \Big| (A \theta _\alpha u ^\lambda _r )\otimes \bs ^* (\nabla ^\rE e ^\lambda _r ) \Big|^2 \Big)
\mu _x (y) \mu _\rB (x) \mu _\rZ (z) \\
\leq \sum _\lambda  \int _{\rZ _\lambda} \int _{\rB _\alpha} & \int _{\rZ _x}
\sum _r \Big( C _5 \| A \| _{\op 1} ^2
\big( \big| \dot \nabla ^{\rE _\flat} \theta _\alpha (u ^\lambda _r |_{\rM _\alpha \times \{ z \}}) (x, y) \big| ^2 \\
&+ \big| \dot \partial ^\rV \theta _\alpha (u ^\lambda _r |_{\rM _\alpha \times \{ z \}}) (x, y) \big| ^2
+ \big| \theta _\alpha (u ^\lambda _r |_{\rM _\alpha \times \{ z \}}) (x, y) \big| ^2 \big) \\
&+ C _6 \| A \| _{\op 0} \big| \theta _\alpha u ^\lambda _r \big|^2 \Big)
\mu _x (y) \mu _\rB (x) \mu _\rZ (z) \\
\leq \sum _\lambda  \int _{\rZ _\lambda} \int _{\rB _\alpha} & \int _{\rZ _x}
J _\alpha (C _7 \| A \| _{\op 1} ^2 + C _8 \| A \| _{\op 0} )
\big( \big| \dot \nabla ^\alpha \theta _\alpha \psi _\alpha \big| ^2 \\
&+ \big| \dot \partial ^\alpha \theta _\alpha \psi _\alpha \big| ^2
+ \big| \dot \partial ^\rZ \theta _\alpha \psi _\alpha \big| ^2 + \big| \theta _\alpha \psi _\alpha \big|^2 \big)
\mu _x (y) \mu _\rB (x) \mu _\rZ (z) \\
\leq \int _\rB \int _{\rZ _x} \chi _\alpha & \int _{\rZ _x}
(C _9 \| A \| _{\op 1} ^2 + C _{10} \| A \| _{\op 0} )
\big( \big| \dot \nabla ^{\hat \rE _\flat} \bx _\alpha ^* ( \theta _\alpha \psi ) \big| ^2 \\
+& \big| \dot \partial ^\bs \bx _\alpha ^* ( \theta _\alpha \psi ) \big| ^2
+ \big| \dot \partial ^\bt \bx _\alpha ^* ( \theta _\alpha \psi ) \big| ^2
+ \big| \bx _\alpha ^* ( \theta _\alpha \psi ) \big|^2 \big)
\mu _x (y) \mu _x (z) \mu _\rB (x).
\end{align*}
Now we use an argument similar to the proof of Lemma \ref{EllReg2}.
Namely, write the integrand as a sum
\begin{align*}
\chi _\alpha \big(  \big| \dot \nabla ^{\hat \rE _\flat} & \bx _\alpha ^* ( \theta _\alpha \psi ) \big| ^2
+ \big| \dot \partial ^\bs \bx _\alpha ^* ( \theta _\alpha \psi ) \big| ^2
+ \big| \dot \partial ^\bt \bx _\alpha ^* ( \theta _\alpha \psi ) \big| ^2
+ \big| \bx _\alpha ^* ( \theta _\alpha \psi ) \big|^2 \big) \\
=& \sum _{g \in S} \chi _\alpha g ^* \chi
\big( \big| \dot \nabla ^{\hat \rE _\flat} \bx _\alpha ^* ( \theta _\alpha \psi ) \big| ^2
+ \big| \dot \partial ^\bs \bx _\alpha ^* ( \theta _\alpha \psi ) \big| ^2
+ \big| \dot \partial ^\bt \bx _\alpha ^* ( \theta _\alpha \psi ) \big| ^2
+ \big| \bx _\alpha ^* ( \theta _\alpha \psi ) \big|^2 \big).
\end{align*}
Then since for all $g$
$$ \int g ^* \chi
\big( \big| \dot \nabla ^{\hat \rE _\flat} \bx _\alpha ^* ( \theta _\alpha \psi ) \big| ^2
+ \big| \dot \partial ^\bs \bx _\alpha ^* ( \theta _\alpha \psi ) \big| ^2
+ \big| \dot \partial ^\bt \bx _\alpha ^* ( \theta _\alpha \psi ) \big| ^2
+ \big| \bx _\alpha ^* ( \theta _\alpha \psi ) \big|^2 \big) = \| \psi \| _{\HS 1},$$
equation \eqref{EstLem1} follows.

Using the same arguments with $\dot \partial ^\alpha $ in place of $\dot \nabla ^\alpha $,
one gets the Equation \eqref{EstLem2}.

As for the last inequality, since $\bt ^{* } \rE |_{\rM _\alpha \times \{ z \}} $
and the connection $(\bx ^{-1} _\alpha ) ^* \nabla ^{\bs ^{*} \rE } $ is trivial along $\exp t Z _0 $,
one can write
\begin{align*}
\nabla ^\alpha _{Z _0 }(\hat A u \otimes \bs ^* e )
=& \frac{d}{d t} \Big|_{t = 0} A u |_{\rM _\alpha \times \{ \exp t Z \} } \otimes \bs ^* e
+ u \otimes \nabla ^{\bs ^{*} \rE '} _{Z _0} \bs ^* e \\
=& A \big( \frac{d}{d t} \Big|_{t = 0} u |_{\rM _\alpha \times \{ \exp t Z \} } \big) \otimes \bs ^* e
+ u \otimes \nabla ^{\bs ^{*} \rE '} _{Z _0} \bs ^* e
= \hat A (\nabla ^\alpha _{Z _0} (u \otimes \bs ^* e )).
\end{align*}
It follows that
$$ \dot \partial ^\rZ \hat A \psi ^\alpha = \hat A (\dot \partial ^\rZ \psi ^\alpha ) ,$$
and from which Equation \eqref{EstLem3} follows.
\end{proof}

Clearly, the arguments leading to Corollary \ref{AEstCor1} can be repeated and we obtain:
\begin{cor}
\label{Main1}
For any smooth bounded operator $\hat A $ and $m = 0, 1 , \cdots $, there exists  $C'_{m,l}> 0 $ such that for any  $\psi \in \Psi ^{- \infty} _\infty (\rM \times _\rB \rM ) ^\rG $, one has
$$ \| \hat A \psi \| _{\HS m}
\leq \big( \sum _{0 \leq l \leq m  } C _{m, l} \| A \|_{\op l} \big) \| \psi \| _{\HS m}.$$
\end{cor}

\begin{nota}
In view of Corollary \ref{Main1}, we shall denote 
$$ \| A \| _{\op' m} := \big( \sum _{0 \leq l \leq m  } C _{m, l} \| A \|_{\op l} \big).$$
\end{nota}
We may assume without loss of generality that $C _{m, l} \geq 2$.
Then one still has
\begin{equation}
\| A _1 A _2 \| _{\op' m} \leq \| A _1 \| _{\op' m} \| A _2 \| _{\op' m} .
\end{equation}

\section{Large time behavior of the heat operator}
In this section we will prove that under the condition of the positivity of the Novikov-Shubin invariant, the heat operator also convergences to the projection operator under the norm $\|\cdot\|_{{\rm HS}\ m}$.
\subsection{The Novikov-Shubin invariant}
Let $\rM \to \rB $ be a fiber bundle with a $\rG$ action,
and $T \rM = \rH \oplus \rV $ be the $\rG$-invariant splitting, as defined in Section \ref{Dfn}.
Recall that we assumed the metric on
$\rH \cong \pi ^{*} T \rB$ is given by pulling back some Riemannian metric on $\rB$.
In other words, $\rV $ is a Riemannian foliation.

Let $\rE \to \rM$ be a flat, contravariant $\rG$-vector bundle,
and $\nabla $ be an invariant flat connection on $\rE$.
Denote $\rE ^\bullet := \wedge ^\bullet \rV' \otimes \rE$.

Since the vertical distribution $\rV$ is integrable,
the deRham differential $d ^{\nabla ^\rE} _\rV$ along $\rV$ is well defined.
Write $\eth _0 := d ^{\nabla ^\rE} _\rV + (d ^{\nabla ^\rE } _\rV )^* , \varDelta := \eth _0^{2} $,
and denote by $e ^{- t \varDelta} $ the heat operator and
$ \varPi _0 $ the orthogonal projection onto $\Ker (\varDelta )$.

The following result is classical.
See, for example, \cite[Proposition 2.8]{B} and \cite[Proposition 3.5]{Heitsch;FoliationHeat}.
\begin{lem}
\label{DuhamelLem}
The heat operator $e ^{- t \varDelta }$ is given by a smooth kernel.
Moreover, for any first order differential operator $A$, one has the Duhamel type formula
\begin{equation}
\label{Duhamel}
[A , e ^{- t \varDelta } ] = - \int _0 ^t e ^{- (t - t') \varDelta } [A , \varDelta ] e ^{- t' \varDelta } d t' .
\end{equation}
\end{lem}


From Lemma \ref{DuhamelLem}, it follows that:
\begin{cor}\label{DuhamelCor}
\cite[Corollary 3.11]{Heitsch;FoliationHeat}
For any $i, j, k$, there exist $C , M > 0$ such that
$$ |(\dot \nabla ^{\rE _\flat}) ^i (\dot \partial ^\bs)^j (\dot \partial ^\bt) ^k e ^{- t \eth _0 ^2}| (x, y, z)
\leq C e ^{- \rM \bd (y, z) ^2}.$$
\end{cor}
Hence $e ^{- t \eth _0 ^2} \in \Psi ^{- \infty} _\infty (\rM \times _\rB \rM , \rE ^\bullet) ^\rG $.

As for $\varPi _0$, one has
\begin{lem}\label{GR}
The kernel of $\varPi _0 $ lies in $\bar \Psi ^{- \infty} _0 (\rM \times _\rB \rM , \rE ^\bullet) ^\rG $.
\end{lem}
\begin{proof}
By \cite[Theorem 2.2]{Gong;CoverTorsion} $\varPi _0$ is also represented by a smooth kernel $\varPi _0 (x, y, z)$.
Moreover by \cite[Theorem 2.2]{Gong;CoverTorsion} and the fact that $\varPi _0 = \varPi _0 ^2 $, one has
$$ \sup _{x \in \rB} \Big\{ \int _{\rZ _x} \chi (x, z) \int _{\rZ _x}
| \varPi _0 (x, y, z) |^2 \mu _x (y) \mu _x (z) \Big\}
= \| \varPi _0 \| _\tau < \infty ,$$
where $\| \cdot \| _\tau $ is the $\tau$-trace norm defined in \cite{Gong;CoverTorsion}
(see also \cite{Schick;NonCptTorsionEst}).

Hence it remains to consider
$\chi _n (x, y, z) \varPi _0 (x, y, z) $,
where $\chi _n \in C ^\infty (\rM \times _\rB \rM) ^\rG$ is a sequence of smooth functions such that
\begin{enumerate}
\item
$0 \leq \chi _n \leq 1$;
\item
$\chi _n$ is increasing and converges point-wise to 1;
\item
$\chi _n (x, y, z) = 0 $ whenever $\bd (y, z) > n r $ for some $r > 0$.
\end{enumerate}
To construct $\chi _n$, let $r > 0 $ to be the infremum of the injective radius of the fibers $\rZ _x$,
and $\phi _1 $
be a non-negative smooth function such that $\phi _1 (t) = 1 $ if $t < \frac{r}{2}$, $\phi _1 (t) = 0 $ if $t > r$.
Then $\chi _1 := \phi _1 \circ \bd (y, z)$ is $\rG$-invariant.
Define
$$ \tilde \chi _n := \chi _1 \star \cdots \star \chi _1 \text{ (convolution by $n$ times).}$$
Note that $\tilde \chi _n (x, y, z) > 0 $ whenever $\bd (y, z) < \frac{n r}{2}$.
Moreover, $\chi _n$ is $\rG$-invariant and $\tilde \chi _n (x, y, z) = 0 $ whenever $\bd (y, z) > n r $.
Since $\tilde \chi _{n + 1}$ is bounded away from $0$ on the support of $\tilde \chi _n$,
clearly one can find smooth functions $\phi _n$ such that $\chi _n := \phi _n \circ \tilde \chi _n$
satisfies conditions (1)-(3).
\end{proof}

Because of Corollary \ref{DuhamelCor} and Lemma \ref{GR}, it makes sense to define:
\begin{dfn}
We say that $\varDelta$ has positive Novikov-Shubin invariant if there exist $\gamma > 0 $ and $C_0>0$ such that
for sufficiently large $t$,
$$ \sup _{x \in \rB} \Big\{ \int _{\rZ _x} \chi (x, z) \int _{\rZ _x}
| (e ^{- t \varDelta } - \varPi _0) (x, y, z) |^2 \mu _x (y) \mu _x (z) \Big\} \leq C _0 t ^{- \gamma }.$$
\end{dfn}

\begin{rem}
The positivity of the Novikov-Shubin invariant is independent of the metrics defining the operator $\varDelta$.
\end{rem}

\begin{rem}
\label{SquareNS}
Since $e ^{- \frac{t}{2} \varDelta } - \varPi _0 $ is non-negative, self adjoint and
$(e ^{- \frac{t}{2} \varDelta } - \varPi _0 ) ^2 = e ^{- t \varDelta } - \varPi _0 $, one has
$$ \sup _{x \in \rB} \Big\{ \int _{\rZ _x} \chi (x, z) \int _{\rZ _x}
| (e ^{- \frac{t}{2} \varDelta } - \varPi _0 ) (x, y, z)|^2 \mu _x (y) \mu _x (z) \Big\}
= \| e ^{- t \varDelta } - \varPi _0 \| _\tau .$$
Hence our definition of having positive Novikov-Shubin invariant is equivalent to that of \cite{Schick;NonCptTorsionEst}.
Our argument here is similar to the proof of \cite[Theorem 7.7]{BMZ}.
\end{rem}

In this paper, we shall always assume $\varDelta $ has positive Novikov-Shubin invariant.
From this assumption, it follows by integration over $\rB$ that there exist constants $\gamma>0$ and $C>0$ such that for $t$ large enough
\begin{equation}
\|  e ^{- t \varDelta } - \varPi _0 \| _{\HS 0} < C t ^{- \gamma }.
\end{equation}

\subsection{Example: The Bismut super-connection}
\begin{dfn}
\label{BismutDfn}
A standard flat Bismut super-connection is an operator of the form
$$ d ^{\nabla ^{\rE}}
:= d ^{\nabla ^{\rE}} _\rV + \nabla ^{\rE ^\bullet _\flat } + \iota _\Theta ,$$
where $\Theta $ is the $\rV$-valued horizontal 2-form defined by
$$ \Theta (X _1 ^\rH , X _2 ^\rH) := - P ^\rV [ X _1 ^\rH , X _2 ^\rH ] ,
\quad \Forall X _1 , X _2 \in \Gamma ^\infty (T \rB), $$
and $\iota _\Theta $ is the contraction with $\Theta$. Note that $P^V$ is not canonical and it depends on the splitting $TM = V\oplus H$.
\end{dfn}

Observe that the adjoint of the Bismut super-connection,
$(d ^{\nabla ^{\rE}})' = (d ^{\nabla ^{\rE}} _\rV ) ^* + (\nabla ^{\rE ^\bullet _\flat } )' - \Lambda _{\Theta ^*} $,
is also flat.
It follows that
$$ (\nabla ^{\rE ^\bullet _\flat })' (d ^{\nabla ^\rE} _\rV ) ^*
+ (d ^{\nabla ^\rE} _\rV ) ^* ( \nabla ^{\rE ^\bullet _\flat } )' = 0. $$
Define
$$ \Omega := \frac{1}{2} \big((\nabla ^{\rE ^\bullet _\flat })' - \nabla ^{\rE ^\bullet _\flat } \big).$$
Observe that $\Omega $ is a tensor (see \cite{Lopez;FoliationHeat} for an explicit formula for $\Omega $).
Moreover one has
$$ \nabla ^{\rE ^\bullet _\flat } (d ^{\nabla ^\rE} _\rV ) ^* +(d ^{\nabla ^\rE} _\rV ) ^* \nabla ^{\rE ^\bullet _\flat }
= 2 \Omega (d ^{\nabla ^\rE} _\rV ) ^* + 2 (d ^{\nabla ^\rE} _\rV ) ^* \Omega .$$
Also, observe that
$(d ^{\nabla ^\rE} _\rV ) + (d ^{\nabla ^\rE} _\rV ) ^* + \nabla ^{\rE ^\bullet _\flat }
+ ((\nabla ^{\rE ^\bullet _\flat } )') ^* $
is an elliptic operator.

\subsection{The regularity result of Alvarez Lopez and Kordyukov}

We first recall that an operator $A$ is called $C ^\infty$-bounded if in normal coordinates the coefficients and their derivatives are uniformly bounded. As in \cite{Lopez;FoliationHeat},
we make the more general assumption that there exists
$C^\infty$-bounded first order differential operator $Q$,
and zero degree operators $R _1, R _2 , R _3 , R _4 $, all $\rG$-invariant,
such that $d _\rV^{\nabla ^{E}} + (d ^{\nabla ^\rE} _\rV ) ^* + Q $ is elliptic, and
\begin{align}
\label{LopezHypo}
Q d _\rV ^{\nabla^{E}}+ d _\rV ^{\nabla^{E}}Q &= R _1 d _\rV ^{\nabla^{E}}+ d _\rV^{\nabla^{E}} R _2, \\ \nonumber
Q (d ^{\nabla^{E}}_\rV)^* + (d ^{\nabla^{E}}_\rV)^* Q &= R _3 (d ^{\nabla^{E}}_\rV)^* + (d ^{\nabla^{E}}_\rV)^* R _4 .
\end{align}
Clearly, in our example,
$\nabla ^{\rE ^\bullet _\flat } + ((\nabla ^{\rE ^\bullet _\flat } )') ^* $ satisfies Equation \eqref{LopezHypo}.

Write $\eth _0 := d ^{\nabla^{E}}_\rV + (d ^ {\nabla^{E}} _\rV)^* , \varDelta := \eth _0 ^2 $,
and denote by $ \varPi _{d _\rV} , \varPi _{d ^* _\rV} $ respectively the orthogonal projections onto
the range of $d ^{\nabla ^\rE} _\rV , (d ^{\nabla ^\rE} _\rV )^* $,
which we shall denote by $\Rg (d _\rV ) , \Rg (d ^* _\rV)$.

In this section, we shall consider the operators
\begin{align*}
B _1 :=& R _1 \varPi _{d _\rV} + R _3 \varPi _{d ^* _\rV}, \\
B _2 :=& \varPi _{d ^* _\rV} R _2 + \varPi _{d _\rV} R _4, \\
B :=& B _2 \varPi _0 + B _1 (\id - \varPi _0 ).
\end{align*}
We recall some elementary formulas regarding these operators from \cite{Lopez;FoliationHeat}:
\begin{lem}
\label{Lopez;2.2}
\cite[Lemma 2.2]{Lopez;FoliationHeat}
One has
\begin{align*}
\nonumber
Q d ^{\nabla^{E}}_\rV + d ^{\nabla^{E}}_\rV Q =& B _1 d ^{\nabla^{E}}_\rV + d^{\nabla^{E}} _\rV B _2, \\
Q (d ^{\nabla^{E}}_\rV)^* + (d ^{\nabla^{E}}_\rV)^* Q =& B _1 (d ^{\nabla^{E}}_\rV )^*+ (d ^{\nabla^{E}}_\rV)^* B _2, \\ \nonumber
[Q , \varDelta ] =& B _1 \varDelta - \varDelta B _2 - \eth _0 (B _1 - B _2 ) \eth _0.
\end{align*}
\end{lem}

One can furthermore estimate the derivatives of $\varPi _0 $.
First, recall that
\begin{lem}
\label{Lopez;2.8}
One has (cf. \cite[Corollary 2.8]{Lopez;FoliationHeat})
$$ [Q + B, \varPi _0 ] = 0 .$$
\end{lem}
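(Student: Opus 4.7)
The plan is to verify the commutator identity by reducing it to the Hodge decomposition $\id = \varPi _0 + \varPi _{d _\rV} + \varPi _{d ^* _\rV}$ associated with $\varDelta$. The orthogonality of these three projections immediately gives $B _1 \varPi _0 = 0$ and $\varPi _0 B _2 = 0$, which simplifies the $B$-commutator to
$$[B, \varPi _0] = B _2 \varPi _0 - \varPi _0 B _1.$$
The lemma thus reduces to computing the off-diagonal blocks of $Q$ relative to the Hodge decomposition and matching them, up to sign, with $B _2 \varPi _0 - \varPi _0 B _1$.

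The heart of the argument extracts these off-diagonal blocks using Lemma \ref{Lopez;2.2}. First, for $v _0 \in \Ker (\varDelta)$, applying the first identity of Lemma \ref{Lopez;2.2} together with $d _\rV v _0 = 0$ yields $d _\rV Q v _0 = d _\rV R _2 v _0$; invoking the injectivity of $d _\rV$ on $\overline{\Rg (d ^* _\rV)}$ then identifies $\varPi _{d ^* _\rV} Q v _0 = \varPi _{d ^* _\rV} R _2 v _0$, and the analogous computation with $d ^* _\rV$ gives $\varPi _{d _\rV} Q v _0 = \varPi _{d _\rV} R _4 v _0$. Dually, for $v _1 = \lim _n d _\rV u _n \in \overline{\Rg (d _\rV)}$, applying Lemma \ref{Lopez;2.2} to $u _n$ and projecting by $\varPi _0$ kills the $d _\rV$-exact terms, leaving $\varPi _0 Q v _1 = \varPi _0 R _1 v _1$; similarly $\varPi _0 Q v _2 = \varPi _0 R _3 v _2$ for $v _2 \in \overline{\Rg (d ^* _\rV)}$. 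Expanding $(Q \varPi _0 - \varPi _0 Q) v$ for an arbitrary $v = v _0 + v _1 + v _2$ and collecting these four identities in the definitions of $B _1$ and $B _2$ recovers the required formula for $[Q, \varPi _0]$, which cancels $[B, \varPi _0]$.

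The main obstacle will be justifying the Hodge decomposition and the relevant range closures at the correct regularity level, since $\varDelta$ is unbounded and $\overline{\Rg (d _\rV)}$ is only a closure. This can be handled by working first on the dense subspace of smooth compactly supported sections, and then using the uniform boundedness of $Q$ and the $R _i$ granted by \eqref{LopezHypo} (together with the bounded geometry hypothesis on $\rM$) to pass the algebraic identities to the limit. Beyond this, the only remaining care needed is sign bookkeeping when reassembling the four block identities into the final expression for $[Q, \varPi _0]$.
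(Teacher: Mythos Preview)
Your proposal is correct and follows essentially the same route as the paper: both reduce $[B,\varPi_0]$ to $B_2\varPi_0 - \varPi_0 B_1$ via orthogonality of the Hodge projections, then use the relations \eqref{LopezHypo} (equivalently Lemma~\ref{Lopez;2.2}) together with the approximation $\varPi_{d_\rV} s = \lim_n d_\rV \tilde s_n$ to identify the four off-diagonal blocks of $Q$ with the corresponding $R_i$-terms. The only cosmetic difference is that where you invoke injectivity of $d_\rV$ on $\overline{\Rg(d_\rV^*)}$ to extract $\varPi_{d_\rV^*} Q \varPi_0$ and $\varPi_{d_\rV} Q \varPi_0$, the paper phrases the same step as ``by considering the adjoint.''
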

\begin{proof}
Here we give a different proof.
From definition we have
$$ B = (\varPi _{d _{V}^*} R _2 + \varPi _d R _4 ) \varPi _0 + R _1 \varPi _{d_{V}} + R _3 \varPi _{d ^*_{V}} ,$$
where we used $\varPi _{d_V} \varPi _0 = \varPi _{d_V ^*} \varPi _0 = 0.$
Hence
$$ B \varPi _0 - \varPi _0 B
= (\varPi _{d ^*_{V}} R _2 + \varPi _{d_{V}} R _4 ) \varPi _0 - \varPi _0 R _1 \varPi _{d_{V}} - \varPi _0 R _3 \varPi _{d ^*_{V}} .$$
For any $s$ one has
$$ \varPi _{d_V} s = \lim _{n \to \infty} d \tilde s _n ,$$
for some sequence $\tilde s _n $ (in some suitable function spaces).
It follows that
\begin{align*}
\varPi _0 R _1 \varPi _{d_V} s
=& \lim _{n \to \infty } \varPi _0 R _1 d \tilde s _1 \\
=& \lim _{n \to \infty } \varPi _0 (Q d^{\nabla^E}_V + d^{\nabla^E}_V Q - d^{\nabla^{E}}_V R _2 ) \tilde s _1
= \varPi _0 Q \varPi _{d_V} s.
\end{align*}
Similarly, one has $\varPi _0 R _3 \varPi _{d_V ^*} = \varPi _0 Q \varPi _{d_V ^*} $ and by considering the adjoint
$\varPi _{d ^*_{V}} R _2 \varPi _0 = \varPi _{d ^*_{V}} Q \varPi _0 $,
and $\varPi _{d ^*_{V}} R _4 \varPi _0 = \varPi _{d ^*_{V}} Q \varPi _0$.
It follows that
\[ [Q + B , \varPi _0 ] = (\id - \varPi _{d _{V}} - \varPi _{d ^*_{V}} ) Q \varPi _0
- \varPi _0 Q (\id - \varPi _{d_{V}} - \varPi _{d ^*_{V}}) = 0 . \qedhere \]
\end{proof}

In other words, regarding
$[Q, \varPi _0 ] $ and $ [B , \varPi _0 ]$
as kernels, one has 
$$\| [Q, \varPi _0 ] \| _{\HS m} = \| [B , \varPi _0 ] \| _{\HS m},$$
provided the right hand side is finite.
Hence, using elliptic regularity and the same arguments as Lemma \ref{GR},
one can prove inductively that
$$\varPi _0 (x, y, z) \in \bar \Psi ^{-\infty} _m (\rM \times _\rB \rM , \rE ^\bullet ), \quad \Forall m.$$

Next, we recall the main result of \cite{Lopez;FoliationHeat}
\begin{lem}
\label{OldLem}
For any $m = 0, 1, \cdots $,
\begin{enumerate}
\item
The heat operator $e ^{- t \varDelta } $, and the operators
$\eth_0 e ^{- t \varDelta }, \varDelta e ^{- t \varDelta } $ map $\cW ^m (\rE )$ to itself as bounded operators.
Moreover, there exist constants $C ^0 _m , C ^1 _m , C ^2 _m > 0 $ such that
\begin{align*}
\| e ^{- t \varDelta } \| _{\op m} \leq & C ^0 _m ,\\
\|\eth_0  e ^{- t \varDelta } \| _{\op m} \leq & t ^{- \frac{1}{2}} C ^1 _m, \\
\| \varDelta e ^{-t \varDelta } \| _{\op m} \leq & t ^{-1} C ^2 _m ,
\end{align*}
for all $t > 0$.
\item
As $t \to \infty $, $e ^{- t \varDelta } $ strongly converges as an operator on $\cW ^m (\rE )$.
Moreover, $(t, s) \mapsto e ^{- t \varDelta } s $ is a continuous map form
$[0, \infty ] \times \cW ^m (\rE ) $ to $\cW ^m (\rE )$.
\item
One has the Hodge decomposition
$$ \cW ^m (\rE ) = \Ker (\varDelta ) + \overline { \Rg (\varDelta )}
= \Ker (\eth _0 ) + \overline {\Rg (\eth _0)}, $$
where the kernel, image and closure are in $\cW ^m (\rE )$.
\end{enumerate}
\end{lem}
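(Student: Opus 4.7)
The plan is to induct on $m$, bootstrapping from the $\cW ^0 = L ^2 $ case, which is entirely spectral, and propagating the estimates to higher Sobolev orders via the approximate commutation of $Q$ and $\varDelta $ furnished by Lemma \ref{Lopez;2.2}.

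First, for $m = 0$: since $\varDelta $ is self-adjoint and non-negative, the spectral theorem immediately yields all three estimates in (1), using the uniform scalar bounds $e ^{-t \lambda } \leq 1 $, $\sqrt{\lambda }\, e ^{-t \lambda } \leq (2et)^{-1/2}$, $\lambda e ^{-t \lambda } \leq (et) ^{-1} $ on $[0, \infty )$. The strong convergence $e ^{-t \varDelta} \to \varPi _0 $ in the $\cW ^0$ strong operator topology and the Hodge decomposition $\cW ^0 = \Ker (\varDelta) + \overline{\Rg (\varDelta)}$ likewise follow from functional calculus.

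For general $m \geq 1 $, I would first establish that the Sobolev norm $\| \cdot \| _m $ is equivalent to $\sum _{i + j \leq m} \| Q ^i D _0 ^j \cdot \| _0 $, using the joint ellipticity of the transversally elliptic $Q$ and the fibrewise elliptic $D _0 $; bounding $\| e ^{-t \varDelta} s \| _m $ then reduces to bounding $\| Q ^i D _0 ^j e ^{-t \varDelta} s \| _0 $ uniformly in $t$. The main tool is Duhamel's formula
\begin{equation*}
Q e ^{-t \varDelta} - e ^{-t \varDelta} Q = \int _0 ^t e ^{-(t-s) \varDelta} [\varDelta , Q] e ^{-s \varDelta} \, ds ,
\end{equation*}
combined with the commutation relation $[Q , \varDelta ] = B _1 \varDelta - \varDelta B _2 - D _0 (B _1 - B _2) D _0 $ from Lemma \ref{Lopez;2.2}. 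The resulting integrands split into two types: $e ^{-(t-s)\varDelta} \varDelta B e ^{-s \varDelta} $ and $e ^{-(t-s) \varDelta } D _0 B D _0 e ^{-s \varDelta} $, with $B$ a bounded zeroth-order operator. For the second type, the $\cW ^0 $ norm of the integrand is bounded by $C (s(t-s))^{-1/2}$, whose integral $\int _0 ^t (s(t-s))^{-1/2} \, ds = \pi $ is uniform in $t$; for the first, I would split the integral at $s = t/2 $ and pair $\varDelta $ with the heat factor of larger time, producing an integrand of order $O(t ^{-1})$ on an interval of length $t/2$, hence again a uniform constant. Iterating this procedure on $i + j$ controls $Q ^i D _0 ^j e ^{-t \varDelta }$, and the bounds on $D _0 e ^{-t \varDelta} $ and $\varDelta e ^{-t \varDelta} $ in $\cW ^m $ follow by the factorizations $D _0 e ^{-t \varDelta} = (D _0 e ^{-t \varDelta / 2})(e ^{-t \varDelta / 2})$ and $\varDelta e ^{-t \varDelta} = (\varDelta e ^{-t \varDelta / 2})(e ^{-t \varDelta / 2})$, combining the $L ^2 $ time-decay of the first factor with the uniform $\cW ^m $-bound on the second.

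Part (2) would follow from strong convergence on $\cW ^0 $, the uniform $\cW ^m $-bound from (1), and density of compactly supported smooth sections, via a standard three-$\varepsilon $ argument; joint continuity on $[0, \infty] \times \cW ^m $ is then routine. For (3), since $\varPi _0 $ has smooth range by fibrewise elliptic regularity and, by the discussion preceding the lemma, extends boundedly to each $\cW ^m $, the orthogonal decomposition $\cW ^m = \varPi _0 \cW ^m + (\id - \varPi _0) \cW ^m $ identifies the second summand with $\overline{\Rg (\varDelta)} \cap \cW ^m $. I expect the main obstacle to be the bookkeeping in the commutation step when $m \geq 2$: the iterated commutators involve further applications of $B _1 , B _2 $ and their derivatives, and the split-at-$t/2 $ trick must be applied consistently at each stage so that the Duhamel integrals close up with genuinely $t$-independent constants and do not accumulate logarithmic divergences.
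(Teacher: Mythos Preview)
Your proposal is correct and is precisely the Alvarez Lopez--Kordyukov argument that the paper invokes: the paper does not supply its own proof of this lemma but simply cites \cite{Lopez;FoliationHeat} with the remark that, although the Sobolev norm $\|\cdot\|_{\op m}$ used here differs from theirs, ``the same arguments apply.'' Your outline---spectral calculus for $m=0$, the norm equivalence $\|\cdot\|_m \sim \sum_{i+j\leq m}\|Q^i D_0^j\cdot\|_0$ via joint ellipticity, Duhamel combined with Lemma~\ref{Lopez;2.2}, the $\int_0^t (s(t-s))^{-1/2}\,ds=\pi$ estimate for the $D_0 B D_0$ piece, and the split-at-$t/2$ trick for the $\varDelta B$ piece---reproduces exactly that scheme, and indeed the paper replays the same computation in its proof of Lemma~\ref{Lopez;2.4} (there with an explicit integration by parts to effect your ``pair $\varDelta$ with the heat factor of larger time''). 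The only point worth flagging is that the norm equivalence with $\sum\|Q^iD_0^j\cdot\|_0$ is genuinely an extra step here, since the paper's $\|\cdot\|_m$ is defined via the covariant derivatives $\dot\nabla^{\rE_\flat},\dot\partial^\rV$ rather than via $Q$ directly; but this is a bounded-geometry ellipticity argument and is exactly what the paper's one-line remark is gesturing at.
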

Remark that our case is slightly different from that of \cite{Lopez;FoliationHeat},
where $\rM$ is assumed to be compact (but with possibly non-compact fibers).
However, the same arguments clearly apply because our $\rM$ is of bounded geometry.

We recall more results in \cite[Section 2]{Lopez;FoliationHeat}.
\begin{lem}
\label{Lopez;2.4}
\cite[Lemma 2.4]{Lopez;FoliationHeat}
For any $m\geq 0$, there exists constant $C^3_0>0$ such that
$$ \| [Q , e ^{- t \varDelta } ] \| _{\op m} \leq C ^3 _m .$$
\end{lem}
\begin{proof}
Using the third equation of Lemma \ref{Lopez;2.2}, Equation (\ref{Duhamel}) becomes
$$ [Q , e ^{- t \varDelta } ]
= \int _0 ^{t } e ^{- (t - t' ) \varDelta } \eth _0 (B _1 - B _2 ) \eth _0 e ^{- t' \varDelta } d t'
- \int _0 ^{t } e ^{- (t - t' ) \varDelta } (B _1 \varDelta - \varDelta B _2 ) e ^{- t' \varDelta } d t' .$$
Using Lemma \ref{OldLem}, we estimate the first integral
\begin{align*}
\Big\| \int _0 ^{t } e ^{- (t - t' ) \varDelta } \eth _0 (B _1 - B _2 ) \eth _0 e ^{- t' \varDelta } d t' \Big\| _{\op m}
\leq & \| B _1 - B _2 \| _{\op m} (C ^1 _m ) ^2 \int _0 ^t \frac{ d t' }{\sqrt{(t - t' ) t' }} \\
=& \| B _1 - B _2 \| _{\op m} (C ^1 _m ) ^2 \pi.
\end{align*}
As for the second integral,
we split the domain of integration into $[0, \frac{t}{2}] $ and $[\frac{t}{2} , t]$,
and then integrate by part to get
\begin{align*}
\int _0 ^{t } e ^{- (t - t' ) \varDelta } (B _1 \varDelta - \varDelta B _2 ) e ^{- t' \varDelta } d t'
=& \int _0 ^{\frac{t}{2} } e ^{- (t - t' ) \varDelta } \varDelta (- B _1 - B _2 ) e ^{- t' \varDelta } d t' \\
&- \int _{\frac{t}{2}} ^{t } e ^{- (t - t' ) \varDelta } (B _1 - B _2 ) \varDelta e ^{- t' \varDelta } d t' \\
&+ e ^{- (t - t' ) \varDelta } B _1 e ^{- t' \varDelta } \Big |^{\frac{t}{2}} _{t' = 0}
- e ^{- (t - t' ) \varDelta } B _2 e ^{- t' \varDelta } \Big |^{t} _{t' = \frac{t}{2}}.
\end{align*}
Again using Lemma \ref{Lopez;2.2}, its $\| \cdot \| _{\op m}$-norm is bounded by
$$ C ^0 _m C ^1 _m (\| B _1 \| _{\op m} + \| B _2 \| _{\op m} ) \Big( \int _0 ^{\frac{t}{2}} \frac{d t'}{t - t'}
+ \int _{\frac{t}{2}} ^t \frac{d t'}{t'} \Big)
+ C ^0 _m (C ^0 _m + 1 ) (\| B _1 \| _{\op m} + \| B _2 \| _{\op m} ),$$
which is uniformly bounded because
$\int _0 ^{\frac{t}{2}} \frac{d t'}{t - t'} = \int _{\frac{t}{2}} ^t \frac{d t'}{t'} = \log 2 $.
\end{proof}

Lemma \ref{Lopez;2.8} suggests that $[Q + B , e ^{-t \varDelta }] $ converges to zero as $t \to \infty$.
Indeed, we shall prove a stronger result, namely,
$[Q + B , e ^{-t \varDelta } ]$ decay polynomially in the $\| \cdot \| _{\HS m} $-norm for all $m$.

\begin{lem}
\label{Trick1}
Suppose there exist $C_m, \gamma > 0 $ such that $\| e ^{- t \varDelta } - \varPi _0 \| _{\HS m} \leq C _m t ^{- \gamma }$,
then there exist $C' _m, \gamma_m > 0$ such that 
$$ \| [Q + B , e ^{-t \varDelta } ] \| _{\HS m}
= \| [Q + B , e ^{-t \varDelta } - \varPi _0 ] \| _{\HS m} \leq C' _m t ^{- \gamma _m } .$$
\end{lem}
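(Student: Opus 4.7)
The plan is to handle the equality immediately using Lemma \ref{Lopez;2.8}, and then prove the decay bound by combining the semigroup idempotent identity $(e^{-t\varDelta/2} - \varPi_0)^2 = e^{-t\varDelta} - \varPi_0$, a Leibniz expansion of the commutator, and the operator-norm to $\HS$-norm estimate of Corollary \ref{Main1}. The equality $\|[Q+B, e^{-t\varDelta}]\|_{\HS m} = \|[Q+B, e^{-t\varDelta} - \varPi_0]\|_{\HS m}$ is immediate from Lemma \ref{Lopez;2.8}: since $[Q+B, \varPi_0] = 0$, the two commutators agree.

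For the decay, the idempotent identity -- the same observation used in Remark \ref{SquareNS} -- follows from $\varPi_0 e^{-s\varDelta} = e^{-s\varDelta}\varPi_0 = \varPi_0$ and $\varPi_0^2 = \varPi_0$. Applying the Leibniz rule to $[Q+B,\,\cdot\,]$ and then using $[Q+B, \varPi_0] = 0$ to absorb the $\varPi_0$-terms in the commutator positions yields
\begin{align*}
[Q+B,\, e^{-t\varDelta} - \varPi_0]
={}& [Q+B,\, e^{-t\varDelta/2}]\,(e^{-t\varDelta/2} - \varPi_0) \\
&{}+ (e^{-t\varDelta/2} - \varPi_0)\,[Q+B,\, e^{-t\varDelta/2}].
\end{align*}
Each summand is a product of a smooth fiber-wise operator and a smoothing kernel, so Corollary \ref{Main1} applies (directly for the first summand; for the second, after taking adjoints to put the kernel factor on the right). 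The outer operator norms $\|[Q+B, e^{-t\varDelta/2}]\|_{\op l}$ are uniformly bounded in $t$ for every $l$: the $Q$-contribution is controlled by Lemma \ref{Lopez;2.4}, and the $B$-contribution follows from the uniform semigroup bound $\|e^{-s\varDelta}\|_{\op l} \leq C^0_l$ of Lemma \ref{OldLem} together with the boundedness of $B$ on every $\cW^l$. The inner factor satisfies $\|e^{-t\varDelta/2} - \varPi_0\|_{\HS m} \leq C_m (t/2)^{-\gamma}$ by hypothesis. Combining the two estimates via Corollary \ref{Main1} gives the required bound, with $\gamma_m = \gamma$.

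The main technical obstacle is the uniform-in-$t$ operator-norm bound for $[Q+B, e^{-t\varDelta/2}]$ on every Sobolev space $\cW^l$. For $Q$ this is exactly the content of Lemma \ref{Lopez;2.4}, but for $B$ it requires that the zeroth-order operators $B_1, B_2$ act boundedly on all $\cW^l$ (which is already an implicit hypothesis in the proof of Lemma \ref{Lopez;2.4}, where $\|B_i\|_{\op m}$ enters). A minor secondary point is checking that the formal adjoint of a smooth operator that is bounded of order $l$ remains bounded of order $l$, in order to legitimately apply Corollary \ref{Main1} to the second summand after adjunction. Once these uniform bounds are in hand, the remainder of the argument is a routine application of Corollary \ref{Main1} and preserves the decay exponent exactly.
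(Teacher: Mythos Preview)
Your proof is correct and follows essentially the same route as the paper: split the semigroup at $t/2$, obtain the identity
\[
[Q+B,\,e^{-t\varDelta}] = [Q+B,\,e^{-t\varDelta/2}](e^{-t\varDelta/2}-\varPi_0) + (e^{-t\varDelta/2}-\varPi_0)[Q+B,\,e^{-t\varDelta/2}],
\]
and then invoke Corollary~\ref{Main1} together with the uniform operator-norm bound of Lemma~\ref{Lopez;2.4}. The only cosmetic difference is in how the $\varPi_0$ is inserted: you use the idempotent identity $(e^{-t\varDelta/2}-\varPi_0)^2 = e^{-t\varDelta}-\varPi_0$ and Lemma~\ref{Lopez;2.8}, while the paper instead computes $[Q+B,\varDelta]$ from Lemma~\ref{Lopez;2.2} and deduces via Duhamel that $\varPi_0[Q+B,e^{-t\varDelta/2}] = [Q+B,e^{-t\varDelta/2}]\varPi_0 = 0$.
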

\begin{proof}
We follow the proof of \cite[Lemma 2.6]{Lopez;FoliationHeat}.
By Lemma \ref{Lopez;2.2}, we get
$$ [Q + B , \varDelta ] = (\varDelta (B _1 + B _2 ) + \eth _0 (B _1 - B _2) \eth _0 ) (\id - \varPi _0 ),$$
it follows that
$$ \varPi _0 [Q + B , e ^{- \frac{t}{2} \varDelta }] = [Q + B , e ^{- \frac{t}{2} \varDelta }] \varPi _0 = 0 .$$
Write
\begin{align*}
 [Q + B , e ^{-t \varDelta } ]
=& [Q + B , e ^{ - \frac{t}{2} \varDelta } ] e ^{- \frac{t}{2} \varDelta }
+ e ^{- \frac{t}{2} \varDelta } [Q + B , e ^{- \frac{t}{2} \varDelta } ] \\
=& [Q + B , e ^{ - \frac{t}{2} \varDelta } ] ( e ^{- \frac{t}{2} \varDelta } - \varPi _0 )
+ (e ^{- \frac{t}{2} \varDelta } - \varPi _0 ) [Q + B , e ^{- \frac{t}{2} \varDelta } ].
\end{align*}
Taking $\| \cdot \| _{\HS m} $ and using
Corollary \ref{Main1} and Lemma \ref{Lopez;2.4}, the claim follows.
\end{proof}

\begin{thm}
\label{Main2}
Suppose $\| e ^{- t \varDelta } - \varPi _0 \| _{\HS 0} \leq C _0 t ^{- \gamma }$ for some $\gamma > 0 , C _0 > 0$.
Then for any $m$, there exists $C'' _m > 0 $ such that
$$\| e ^{- t \varDelta } - \varPi _0 \| _{\HS m} \leq C'' _m t ^{- \gamma } , \quad \Forall t > 1 .$$
\end{thm}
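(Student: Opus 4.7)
The plan is to proceed by induction on $m$, with the base case $m=0$ supplied by the hypothesis. Set $\psi_t := e^{-t\varDelta} - \varPi_0$ and assume the conclusion at all levels $k \leq m-1$. Because the definition of $\| \cdot \|_{\HS m}$ only adds top-order contributions $(\dot\nabla^{\hat\rE_\flat})^i (\dot\partial^\bs)^j (\dot\partial^\bt)^k \psi_t$ with $i+j+k = m$ to the $\HS(m-1)$ norm (already controlled by induction), it suffices to bound these top-order terms, which I would handle by treating the vertical and transverse directions separately.

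Vertical derivatives are controlled through the fiber-wise ellipticity of $\varDelta$. Since $\varPi_0$ is the orthogonal projection onto $\Ker(\varDelta)$, one has $\varDelta^k \varPi_0 = 0$, so that $\varDelta^k \psi_t = (\varDelta^k e^{-t/2\varDelta}) \circ \psi_{t/2}$. Combining the spectral-theorem bound $\|\varDelta^k e^{-t/2\varDelta}\|_{\op 0} \leq C_k t^{-k}$ (obtained by iterating Lemma \ref{OldLem}) with Corollary \ref{Main1} and the $m=0$ hypothesis gives $\|\varDelta^k \psi_t\|_{\HS 0} \leq C t^{-k-\gamma} \leq C t^{-\gamma}$ for $t>1$. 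A fiber-wise G\aa rding-type estimate on the kernel in the $y$ and $z$ variables then converts these into bounds on every $(\dot\partial^\bs)^j(\dot\partial^\bt)^k\psi_t$ with $j+k \leq m$, each decaying at rate $t^{-\gamma}$ in $\HS 0$.

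Transverse (and mixed) derivatives are handled by iterating Lemma \ref{Trick1}. One step of that lemma, combined with the inductive hypothesis, already furnishes $\|[Q+B, \psi_t]\|_{\HS m-1} \leq C' t^{-\gamma}$. The splitting $[Q+B, \psi_t] = [Q+B, e^{-t/2\varDelta}] \psi_{t/2} + \psi_{t/2} [Q+B, e^{-t/2\varDelta}]$ appearing in the proof of that lemma can be differentiated again by the derivation $[Q+B, \,\cdot\,]$; iterating the Duhamel computation behind Lemma \ref{Lopez;2.4} using the commutator identities of Lemma \ref{Lopez;2.2} produces uniform operator-norm bounds $\|[Q+B, \dots, [Q+B, e^{-t\varDelta}] \dots ]\|_{\op 0} \leq C_k$ for the $k$-fold iterated commutator, and a further application of Corollary \ref{Main1} gives $\HS 0$-decay at rate $t^{-\gamma}$ for the $k$-fold iterated commutator of $\psi_t$ with $Q+B$, for every $k\leq m$. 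Since $Q+B$ is transversally elliptic and its transverse principal symbol agrees with that of $\dot\nabla^{\hat\rE_\flat}$ up to zeroth-order bundle maps and vertical derivatives (already handled above), a transverse elliptic bootstrap recovers every remaining top-order term.

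The main obstacle is twofold: (a) verifying the uniform operator-norm bound on the iterated commutators of $Q+B$ with $e^{-t\varDelta}$, which requires iterating the Duhamel argument of Lemma \ref{Lopez;2.4} with repeated use of Lemma \ref{Lopez;2.2}; and (b) matching the analytic operators $Q+B$ and $\varDelta$ against the geometric operators $\dot\nabla^{\hat\rE_\flat}, \dot\partial^\bs, \dot\partial^\bt$ without losing the $t^{-\gamma}$ rate. Both reduce to absorbing bounded zeroth-order perturbations into the $\HS(m-1)$ scale, where the inductive hypothesis applies via Corollary \ref{Main1}. Combining the vertical and transverse estimates then yields $\|\psi_t\|_{\HS m} \leq C''_m t^{-\gamma}$ for $t > 1$, closing the induction.
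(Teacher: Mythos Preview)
Your induction scheme and the ingredients you invoke (Corollary \ref{Main1}, Lemma \ref{Trick1}, transverse ellipticity of $Q$) are exactly those the paper uses, but you take a harder route at the inductive step and leave a real gap. You try to recover the full $m$-th order derivative information directly at the $\HS 0$ level, which forces you to control $k$-fold iterated commutators $\ad_{Q+B}^k(e^{-t\varDelta})$ in operator norm for every $k\le m$. Your obstacle (a) is genuine: the Duhamel argument behind Lemma \ref{Lopez;2.4} does not iterate cleanly, because each further commutation produces terms such as $[Q,B_1]$, and $B_1 = R_1\varPi_{d_\rV} + R_3\varPi_{d_\rV^*}$ involves the non-local spectral projections $\varPi_{d_\rV},\varPi_{d_\rV^*}$. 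The identities of Lemma \ref{Lopez;2.2} control $[Q,d_\rV]$ and $[Q,d_\rV^*]$, not $[Q,\varPi_{d_\rV}]$, so a uniform-in-$t$ bound on the higher commutators is not automatic and you have not supplied one.

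The paper sidesteps this entirely. Rather than descending all the way to $\HS 0$, it packages the elliptic bootstrap as a single-step inequality on kernels,
\[
\|\psi\|_{\HS m+1}\le \tilde C_m\big(\|\psi\|_{\HS m}+\|D_0\psi\|_{\HS m}+\|\psi D_0\|_{\HS m}+\|[Q,\psi]\|_{\HS m}\big),
\]
so that only \emph{one} application of $D_0$ (left and right) and \emph{one} commutator with $Q$ are needed, each estimated at the already-established level $\HS m$. For $D_0\psi_t$ one writes $D_0\psi_t = (D_0 e^{-\frac{t}{2}\varDelta})\,\psi_{t/2}$ and applies Corollary \ref{Main1} together with the bound on $\|D_0 e^{-\frac{t}{2}\varDelta}\|_{\op l}$ from Lemma \ref{OldLem}. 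For $[Q,\psi_t]$ one splits $[Q,\psi_t]=[Q+B,\psi_t]-[B,\psi_t]$, handles the first piece by Lemma \ref{Trick1} (which already uses the inductive hypothesis at level $m$), and the second by Corollary \ref{Main1} and the boundedness of $B$ in every $\|\cdot\|_{\op l}$. No iterated commutators, no separate G\aa rding argument for purely vertical versus mixed derivatives: the single elliptic estimate handles both directions at once.
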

\begin{proof}
We prove the theorem by induction. The case $m = 0 $ is given.
Suppose that for some $m$, $\| e ^{- t \varDelta } - \varPi _0 \| _{\HS m} \leq C _m t ^{- \gamma } .$
Consider $\| e ^{- t \varDelta } - \varPi _0 \| _{\HS m +1}$.

Since $Q $ is a first order differential operator,
for any kernel $\psi \in \Psi ^{- \infty } _\infty (\rM \times _\rB \rM , \rE ^\bullet ) ^\rG$,
$[Q , \psi ] $ is also a kernel lying in $\Psi ^{- \infty } _\infty (\rM \times _\rB \rM , \rE ^\bullet ) ^\rG$,
that is in particular given by a composition of the covariant derivatives
$\dot \nabla ^{\hat \rE _\flat} , \dot \partial ^\bs , \dot \partial ^\bt $ and some tensors acting on $\psi $.
Since $\| \psi \| _{\HS m} $ is by definition the $\| \cdot \| _{\HS 0} $ norm of the $m$-th derivatives of $\psi$,
elliptic regularity (Lemma \ref{EllReg2}) implies
$$ \| \psi \| _{\HS m+1}
\leq \tilde C _m (\| \psi \|_{\HS m} + \| \eth_0 \psi \|_{\HS m} + \| \psi \eth _0 \| _{\HS m}+\| [Q , \psi ] \| _{\HS m}),$$
for some constant $\tilde C _m > 0$.
Put $\psi = e ^{- t \varDelta } - \varPi _0 $.
The theorem then follows from the estimates
\begin{align*}
\| \eth_0 (e ^{- t \varDelta } - \varPi _0 ) \| _{\HS m}
= & \| (e ^{- t \varDelta } - \varPi _0 ) \eth _0 \| _{\HS m} \\
\leq & \big( \sum _{0 \leq l \leq m } C ' _{m, l} \| \eth _0 ( e ^{- \frac{t}{2} \varDelta } - \varPi _0 ) \| _{\op l} \big)
\| e ^{- \frac{t}{2} \varDelta } - \varPi _0 \| _{\HS m} \\
\leq & \big( \sum _{0 \leq l \leq m } C ' _{m, l} C ^1 _l \big( \frac{t}{2} \big) ^{- \frac{1}{2} } \big)
C _m \big( \frac{t}{2} \big) ^{- \gamma }, \\
\| [Q , e ^{- t \varDelta } - \varPi _0 ] \| _{\HS m}
\leq & \| [Q + B , e ^{- t \varDelta } - \varPi _0 ] \| _{\HS m}
+ \| [B , e ^{- t \varDelta } - \varPi _0 ] \| _{\HS m} \\
\leq & C ' _m t ^{- \gamma }
+ 2 \big( \sum _{0 \leq l \leq m  } C ' _{m, l} \| B \|_{\op l} \big) C _m t ^{- \gamma }.
\end{align*}
Note that we used Lemma \ref{Trick1} for the last inequality.
\end{proof}

\section{Sobolev convergence}
In this section we will use the method of \cite{Schick;NonCptTorsionEst} to prove that under the condition of positivity of the Novikov-Shubin invariant the $L^2$-analytic torsion form is a smooth form.

Let $\nabla ^\rE $ be a flat connection on $\rE$.
Define the number operators on $\wedge ^\bullet \rH' \otimes \wedge ^\bullet \rV' \otimes \rE $ by
$$N _\Omega |_{\wedge ^q \rH' \otimes \wedge ^{q'} \rV' \otimes \rE } := q ,
\quad N |_{\wedge ^q \rH' \otimes \wedge ^{q'} \rV' \otimes \rE } := q' .$$
In this section,
we consider the rescaled Bismut super-connection \cite[Chapter 9.1]{BGV;Book}
$$ \eth (t) := \frac{t ^{\frac{1}{2}}}{2} t ^{- \frac{N_\Omega }{2} } (d + d ^*) t ^{ \frac{N _\Omega}{2} }
= \frac{1}{2} \big( t ^{\frac{1}{2}} (d _\rV + d _\rV ^*)
+ (\nabla ^{\rE _\flat} + (\nabla ^{\rE _\flat }) ')
+ t ^{- \frac{1}{2}} (- \Lambda _{\Theta ^*} + \iota _\Theta ) \big).$$
Denote
$$ D _0 := - \frac{1}{2}(d _\rV - d ^* _\rV ),
\quad \Omega _t := - \frac{1}{2} ((\nabla ^{\rE _\flat} - (\nabla ^{\rE _\flat }) ')
- \frac{t ^{-\frac{1}{2}}}{2} (- \Lambda _{\Theta ^*} - \iota _\Theta ),
\quad D (t) := t ^{\frac{1}{2}} D _0 + \Omega _t .$$
The curvature of $\eth (t)$ can be expanded in the form:
$$ \eth (t) ^2 = - D (t) ^2
= t \varDelta + t ^{\frac{1}{2}} \Omega _t D _0 + t ^{\frac{1}{2}} D _0 \Omega _t + \Omega _t ^2 .$$
Hence as a consequence of Duhamel's expansion (cf. \cite{BGV;Book}), we have
\begin{align}
\nonumber
e ^{ - \eth (t) ^2 } = e ^{ D (t) ^2 } = e ^{- t \varDelta }
+ \sum _{n = 1} ^{\dim \rB} \int _{(r _0 , \cdots , r _k ) \in \Sigma ^n} &
e ^ {- r _0 t \varDelta }
(t ^{\frac{1}{2}} \Omega _t D _0 + t ^{\frac{1}{2}} D _0 \Omega _t + \Omega _t ^2 )
e ^ {- r _1 t \varDelta }
\\ \nonumber
& \cdots (t ^{\frac{1}{2}} \Omega _t D _0 + t ^{\frac{1}{2}} D _0 \Omega _t + \Omega _t ^2 )
e ^{- r _n t \varDelta } d \Sigma ^n,
\end{align}
where $\Sigma ^n := \{(r _0 , r _1 \cdots , r _n ) \in [0, 1] ^{n + 1} : r _0 + \cdots + r _n = 1 \}$.

\subsection{The large time estimate of the rescaled heat operator}
In this section,
we follow \cite[Section 4]{Schick;NonCptTorsionEst} to estimate the Hilbert-Schmit norms of $e ^{ - \eth (t) ^2 }$ (see Theorem \ref{Main3} below).

Let $\gamma ' := 1 - (1 + \frac{2 \gamma }{\dim \rB + 2+2\gamma } ) ^{-1} $,
$\bar r (t) := t ^{- \gamma '}$. Fix $\bar t $ such that $\bar r ( \bar t) < (\dim \rB + 1 ) ^{-1} $.
One has the following counterparts of \cite[Lemma 4.2]{Schick;NonCptTorsionEst}:
\begin{lem}
For $c = 0, 1, 2$, there exists a constant $C_{m}$ such that 
$$ \| (\sqrt {t} \eth _0 ) ^{\frac{c}{2}} e ^{r t (D _0) ^2 } \| _{\op ' m} \leq C _m r ^{- \frac{c}{2}} ,
\text{for any $t > \bar t , 0 < r < 1$ (by Lemma \ref{OldLem})} ;$$
And for any $t > \bar t , \bar r (t) < r < 1$, 
\begin{align*}
\| e ^{r t (D _0) ^2 } \| _{\HS m} \leq & C _m (r t) ^{- \gamma} ,& \text{(by Theorem \ref{Main2})} \\
\| (\sqrt {t} \eth _0 ) ^{\frac{c}{2}} e ^{r t (D _0) ^2 } \| _{\HS m}
\leq & C _m r ^{- \frac{c}{2}} (r t) ^{- \gamma}, \text{ if $c = 1, 2$}. & \text{(by Corollary \ref{Main1})}
\end{align*}
\end{lem}
We furthermore observe that the arguments leading to the main result \cite[Theorem 4.1]{Schick;NonCptTorsionEst} still hold if one replaces the operator and 
$\|\cdot\|_{\tau}$ norm respectively by $\|\cdot\|_{\op' m}$ and $\|\cdot\|_{\HS m}$ for any $m$.

The arguments in \cite[Section 4]{Schick;NonCptTorsionEst} are elementary, so we shall only recall some key steps. 

First, one splits the domain of integration
$\Sigma ^n = \bigcup _{I \neq \{ 0, \cdots , n \}} \Sigma ^n _{\bar r (t), I} $,
where
$$ \Sigma ^n _{\bar r(t) , I} := \{ (r _0 , \cdots , r _n ) : r _i \leq \bar r (t), \Forall i \in I ,r_{j}\geq \bar{r}(t),\Forall j\notin I\}.$$
Define
\begin{equation}
\label{KDfn}
K (t, n, I, c _0 , \cdots c _n ; a _1 , \cdots a _n )
:= \int _{\Sigma ^n _{\bar r(t) , I}}
(t ^{\frac{1}{2}} D _0 ) ^{c _0} e ^ {- r _0 t \varDelta }
\prod _{i=1} ^n ( \Theta _t ^{a _i } (t ^{\frac{1}{2}} D _0 )^{c _i} e ^ {- r _i t \varDelta }) d \Sigma ^n ,
\end{equation}
for $c _i = 0, 1, 2, a _j = 1, 2 $.
Then one has
$$ e ^{- \eth (t) ^2}
= e ^{D (t) ^2} = \sum K (t, n, I, c _0 , \cdots c _n ; a _1 , \cdots a _n ) $$
by grouping terms involving $D _0$ together.

We shall consider the kernels 
$ K (t, n, I, c _0 , \cdots c _n ; a _1 , \cdots a _n ) (x, y, z)$
of the terms in the summation above.
Consider the special case when $c _i = 0, 1$.
One has the analogue of \cite[Proposition 4.6]{Schick;NonCptTorsionEst}:
\begin{lem}
\label{Schick4.6}
There exists $\varepsilon>0$ such that as $t \to \infty $, 
\begin{align*}
K (t , n, I, c _0 , \cdots c _n &, a _1, \cdots , a _n ) (x, y, z) \\
=&
\left\{
\begin{array}{ll}
(\frac{1}{n !} \varPi _0 \Omega ^{a _1} \varPi _0 \cdots \varPi _0) (x, y, z)+ O (t ^{- \varepsilon })
& \text{ if } I = \emptyset , c _0 , \cdots , c _n = 0 \\
O (t ^{- \varepsilon }) & \text{ otherwise}
\end{array}
\right.
\end{align*}
in the $\| \cdot \| _{\HS m}$-norm.
\end{lem}
\begin{proof}
We first consider the case $I=\emptyset$. Suppose furthermore $c_{q}=1$ for some $q$.
By Corollary \ref{Main1}, The $\| \cdot \| _{\HS m}$-norm of the integrand on the r.h.s. of \eqref{KDfn} is bounded by 
$$\|(t^{{1\over 2}}D_{0})^{c_{0}}e^{-r_{0}t\Delta}\|_{\op' m}\cdots \|\Omega_{t}^{a_{q}}\|_{\op' m}\|(t^{1\over 2}D_{0})e^{-r_{q}t\Delta}\|_{\HS m}\cdots \|(t^{1\over 2}D_{0})^{c_{n}}e^{-r_{n}t\Delta}\|_{\op' m}$$
$$\leq C_{m}'r_0^{-{{c_0}\over 2}}\cdots r_q^{-{{c_q}\over 2}}(r_q t)^{-\gamma}\cdots r_n^{-{{c_n}\over 2}}$$
$$\leq C_{m}'\bar{r}(t)^{-{n\over 2}-\gamma}t^{-\gamma}.$$
Integrating, we have the estimate
$$\|K(t,n,c_0,\cdots,c_n;a_1,\cdots,a_n) (x, y, z)\|_{\HS m} 
\leq C_{m}'t^{-\gamma+\gamma'({{n\over 2}}+\gamma)}\int d\Sigma^n,$$
which is $O(t^{-\varepsilon})$ with $\varepsilon=\gamma(1-{{{\rm dim}B+2\gamma}\over{{\rm dim}B}+2+2\gamma})$.

Next, suppose $I=\emptyset$ and $c_i=0$ for all $i$. Write $e^{-r_0 t\Delta-\Pi_0}+\Pi_0$ and split the integrand 
$$(e^{-r_0 t\Delta}\Omega_{t}^{a_1}e^{-r_1 t\Delta}\cdots e^{-r_n t\Delta} ) (x, y, z)$$
into $2^{n+1}$ terms. If any term contains a $e^{-r_i t\Delta}-\Pi_0$ factor, similar arguments as above shows that it is $O(t^{-\gamma})$. Hence the only term that dose not converge to $0$ is 
$$(\varPi_0 \Omega^{a_1} \varPi_0\cdots \varPi_0 )(x, y, z).$$
Since the volume of $\Sigma^{n}_{\bar{r}(t),I}$ converges to $1\over{n!}$ as $t\to \infty$, the claim follows.

It remains to consider the case when $I$ is non-empty. Write $I=\{i_1,\cdots,i_s\}$, $\{0,\cdots,n\}\setminus I=:\{k_1,\cdots,k_{s'}\}\neq \emptyset$. For $t$ sufficiently large $I\neq \{0,\cdots,n\}$. Suppose $c_q=1$ for some $q\notin I$. Then we take $\|\cdot\|_{\HS m}$-norm for $(t^{1\over 2}D_0)e^{-r_q t\Delta}$ term, and estimate 
\begin{align*}
\|K(t,n,c_0,\cdots,c_n;a_1,\cdots,a_n) (x, y, z)\|_{\HS m} & \\
\leq \int_{0}^{\bar{r}(t)}\cdots \int_{0}^{\bar{r}(t)}
\Big(\int_{\{(r_{k_1},\cdots,r_{k_s'}):(r_0,\cdots,r_n)\in\Sigma^{n}_{\bar{r}(t),I}\}} & C'_m r_0^{-{{c_0}\over 2}}\cdots r_q ^{-{{c_q}\over 2}}(r_q t)^{-\gamma}\cdots r_n^{-{{c_n}\over 2}} \\
& d(r_{k_1}\cdots r_{k_{s'}})\Big) dr_{i_1}\cdots dr_{i_s}.
\end{align*}
As in the $I=\emptyset$ case, the integral over $\{(r_{k_1}),\cdots,r_{k_{s'}}:(r_0,\cdots,r_n)\in \Sigma^{n}_{\bar{r}(t),I}\}$ is $O(t^{-\varepsilon})$; while $\int_{0}^{\bar{r}(t)}r_i^{{c_i}\over 2}dr_i=O(t^{-\gamma'(1-{{c_i}\over 2})})$. Again the claim is verified.

Finally if $c_i=0$ for all $i\in I$, then
\begin{align*}
\|K(t,n,c_0,\cdots,c_n;a_1,\cdots,a_n) (x, y, z)\|_{\HS m} & \\
\leq \int_{0}^{\bar{r}(t)}\cdots \int_{0}^{\bar{r}(t)}\Big(\int_{\{(r_{k_1},\cdots,r_{k_s'}):(r_0,\cdots,r_n)\in\Sigma^{n}_{\bar{r}(t),I}\}} & C''_m r_0^{-{{c_{i_1}}\over 2}}\cdots r_{n}^{-{{c_{i_s}}\over 2}} \\
& d(r_{k_1}\cdots r_{k_{s'}}) \Big) dr_{i_1}\cdots dr_{i_s} \\
&= O(t^{-\gamma'(1-{{c_i}\over 2})}). \qedhere
\end{align*}
\end{proof}

One then turns to the case for some $i$, $c _i = 2$.
If $I$ and $J$ are disjoint subsets of $\{ 0, \cdots , n \}$ with $I = \{ i _1, \cdots , i _r \}$,
and $\{ 0, \cdots , n \} \setminus (I \bigcup J) =: \{ k _0, \cdots , k _q \} \neq \emptyset $,
denote by
$$ \Sigma ^n _{\bar r (t), I, J}
:= \{ (r _0 , \cdots , r _n) \in \Sigma ^n _{\bar r (t), I} : r _j = \bar r (t), \text{ whenever } j \in J \} ,$$
and define
\begin{align*}
K (t, n, I, J, c _0 , \cdots c _n &; a _1 , \cdots a _n ) \\
:= \int _0 ^{\bar r (t)} \cdots \int _0 ^{\bar r (t)} &
\int _{ \{ (r _{k _0}, \cdots r _{k _q}) : (r _0 , \cdots , r _n) \in \Sigma ^n _{\bar r (t), I} \}} \\
(t ^{\frac{1}{2}} D _0 ) ^{c _0} & e ^ {- r _0 t \varDelta }
\prod _{i=1} ^n ( \Theta _t ^{a _i } (t ^{\frac{1}{2}} D _0 )^{c _i} e ^ {- r _i t \varDelta })
\Big| _{\Sigma ^n _{\bar r (t), I, J}}
d ^q (r _{k _0}, \cdots r _{k _q}) d r _1 \cdots d r _r .
\end{align*}
Using integration by parts, one gets \cite[Equation (4.17)]{Schick;NonCptTorsionEst},
\begin{align}
\label{IntPart}
\nonumber
K(t, &n, I \cup \{ i_p \} , J; \cdots, 2, \cdots , c _{k _0}, \cdots ; \cdots , a _{i _p}, a _{i _{p+1}}, \cdots ) \\
=& \left\{
\begin{array}{ll}
K(t, n, I, J \cup \{ i _p \} ; \cdots, 0, \cdots , c _{k _0}, \cdots ; \cdots , a _{i_p} , a _{i _{p+1}}, \cdots ) & \\
- K(t, n - 1, I, J; \cdots, \cdots, c _{k _0}, \cdots ; \cdots , a _{i _0} + a _{i _{p+1}}, \cdots )
& q > 0 ,\\ 
+ K(t, n, I \cup \{ i _p \} , J \cup \{ k _0 \} ; \cdots , 0, \cdots , c _{k _0}, \cdots ;
\cdots , a _{i _p} , a _{i _{p +1}}, \cdots ) \\
+ K(t, n, I \cup \{ i _p \} , J; \cdots , 0, \cdots, c _{k _0} + 2, \cdots ;
\cdots, a _{i _p} , a _{i _{p +1}}, \cdots ) \\
K(t, n, I, J \cup \{ i _p \} ; \cdots, 0, \cdots , c _{k _0}, \cdots ; \cdots , a _{i_p} , a _{i _{p+1}}, \cdots ) & \\
- K(t, n - 1, I, J; \cdots, \cdots, c _{k _0}, \cdots ; \cdots , a _{i _0} + a _{i _{p+1}}, \cdots )
& q = 0. \\ 
+ K(t, n, I \cup \{ i _p \} , J; \cdots , 0, \cdots, c _{k _0} + 2, \cdots ;
\cdots, a _{i _p} , a _{i _{p +1}}, \cdots )
\end{array}
\right.
\end{align}
We remark that the proof of \cite[Equation (4.17)]{Schick;NonCptTorsionEst} does not involve any norm,
therefore we omit the details here.

Using Equation \eqref{IntPart} repeatedly, one eliminates all terms with $c _i = 2$.

On the other hand one has the following straightforward generalization of Lemma \ref{Schick4.6} (compare with \cite[Proposition 4.7]{Schick;NonCptTorsionEst}):
\begin{lem}
Suppose $c _i = 0, 1$. As $t \to \infty $,
\begin{align*}
K (t , n, I, J &, c _0, \cdots c _n ; a _1 , \cdots , a _n ) (x, y, z) \\
=&
\left\{
\begin{array}{ll}
(\frac{1}{(n - |J|)!} \varPi _0 \Omega ^{a _1} \varPi _0 \cdots \varPi _0 )(x, y, z) + O (t ^{- \gamma '})
& \text{ if } I = \emptyset , c _0 , \cdots , c _n = 0 \\
O (t ^{- \gamma' }) & \text{ otherwise,}
\end{array}
\right.
\end{align*}
for some $\gamma ' > 0$, in the $\| \cdot \| _{\HS m}$-norm.
\end{lem}

Thus the term $ K (t, n, I, c _0 , \cdots c _n ; a _1 , \cdots a _n ) $ converges to $0$ unless
$$ c _i = 0 \text{ whenever } i \in I , \quad c _i = 2 \text { whenever } i \not \in I. $$
Then one follows exactly as \cite[Section 4.5]{Schick;NonCptTorsionEst} to compute the limit,
and concludes with the following analogue of \cite[Theorem 4.1]{Schick;NonCptTorsionEst}:
\begin{thm}
\label{Main3}
For $k = 0, 1, 2$ and any $m \in \bbN $,
$$ \lim _{t \to \infty} D (t) ^k e ^{- \eth (t) ^2 } (x, y, z)
= \varPi _0 (\Omega \varPi _0 ) ^k
e ^{ (\Omega \varPi _0 ) ^2 } (x, y, z)$$
in the $\| \cdot \| _{\HS m} $-norm,
where $\Omega := - \frac{\nabla ^{\rE _\flat} - (\nabla ^{\rE _\flat }) ^* }{2}$.
Moreover, there exits $\varepsilon'>0$ such that as $t \to \infty $,
$$ \big\| (D (t) ^k e ^{- \eth (t) ^2 }
- \varPi _0 (\Omega \varPi _0 ) ^k
e ^{ (\Omega \varPi _0 ) ^2 } )(x, y, z) \big\| _{\HS m}
= O (t ^{- \varepsilon '} ).$$
\end{thm}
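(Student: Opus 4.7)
The plan is to follow the strategy of \cite[Theorem 4.1]{Schick;NonCptTorsionEst}, systematically replacing the trace norm $\|\cdot\|_\tau$ by $\|\cdot\|_{\HS m}$ and the single operator norm by the composite bound $\sum_{0 \leq l \leq m} C'_{m,l}\|\cdot\|_{\op l}$ supplied by Corollary \ref{Main1}. The polynomial decay of the heat kernel in all Sobolev-Hilbert-Schmidt norms, provided by Theorem \ref{Main2}, is precisely what is needed to make each step of that argument go through in our setting.

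First I would expand $D(t)^k e^{-\eth(t)^2}$ via the Duhamel formula given in the definition above. Each resulting summand is an iterated integral over a simplex $\Sigma^n$, $n \leq \dim\rB$, of a product of heat operators $e^{-r_i t\varDelta}$ interlaced with insertions drawn from $\{t^{1/2}\Omega_t D_0,\,t^{1/2}D_0\Omega_t,\,\Omega_t^2\}$, together with the $k$ outer $D(t) = t^{1/2}D_0 + \Omega_t$ factors. Into each heat factor I substitute $e^{-r_i t\varDelta} = \varPi_0 + (e^{-r_i t\varDelta} - \varPi_0)$ and expand the product, producing a ``pure $\varPi_0$'' subterm and a collection of error subterms, each containing at least one factor $(e^{-r_i t\varDelta} - \varPi_0)$.

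For the pure $\varPi_0$ subterm I use $D_0 \varPi_0 = \varPi_0 D_0 = 0$ (Hodge decomposition, Lemma \ref{OldLem}(3)) to kill every insertion containing a $D_0$, including the outer $t^{1/2}D_0$ contributions from $D(t)^k$. Only the $\Omega_t^2$ insertions survive, and since $\Omega_t = \Omega + O(t^{-1/2})$, the standard simplex identity $\int_{\Sigma^n}d\Sigma^n = 1/n!$ rearranges the remaining sum into the Taylor expansion of $\varPi_0(\Omega\varPi_0)^k e^{(\Omega\varPi_0)^2}$, with an $O(t^{-1/2})$ remainder.

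For each error subterm I view the factor $(e^{-r_i t\varDelta} - \varPi_0)$ as the kernel $\psi$ and the entire remainder of the product as the smooth bounded fiber-wise operator $\hat A$, so that Corollary \ref{Main1} yields $\|\hat A\psi\|_{\HS m} \leq (\sum_l C'_{m,l}\|A\|_{\op l})\|\psi\|_{\HS m}$. Theorem \ref{Main2} controls the kernel factor by $(r_i t)^{-\gamma}$, while Lemma \ref{OldLem} bounds the remaining operator-norm factors via $\|e^{-r_j t\varDelta}\|_{\op l} \leq C^0_l$ and $\|D_0 e^{-r_j t\varDelta}\|_{\op l} \leq C^1_l(r_j t)^{-1/2}$, and $\Omega_t$ contributes uniform bounds. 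The $t^{1/2}$ weights accompanying $D_0$ cancel the square-root singularities, and the simplex integrals converge by virtue of $\int_0^1 r^{-1/2}(1-r)^{-1/2}dr < \infty$, giving a total bound $O(t^{-\gamma'})$ for some $\gamma' > 0$ common to all subterms.

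The main obstacle is the combinatorial bookkeeping: across all Duhamel terms, all substitution patterns, and all placements of the decaying factor inside the product, one must verify uniformly in $m$ that Corollary \ref{Main1} applies in the chosen arrangement and that the resulting integrands on $\Sigma^n$ remain integrable with a uniform polynomial rate. This is essentially the content of \cite[Section 4]{Schick;NonCptTorsionEst} transcribed norm-by-norm to $\|\cdot\|_{\HS m}$; the two ingredients Corollary \ref{Main1} and Theorem \ref{Main2} are tailor-made so that no step of that argument breaks.
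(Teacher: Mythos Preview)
Your high-level plan—run the Azzali--Goette--Schick argument with $\|\cdot\|_{\HS m}$ in place of $\|\cdot\|_\tau$, invoking Corollary~\ref{Main1} and Theorem~\ref{Main2} as the two new inputs—is exactly what the paper does. But the specific decomposition you describe is \emph{not} the one AGS (and hence the paper) actually use, and as you wrote it there is a gap.

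You propose to substitute $e^{-r_i t\varDelta}=\varPi_0+(e^{-r_i t\varDelta}-\varPi_0)$ at every heat factor and expand. The paper instead fixes a $t$-dependent threshold $\bar r(t)=t^{-\gamma'}$ with $\gamma'=1-(1+\tfrac{2\gamma}{\dim\rB+2})^{-1}$ and subdivides the simplex $\Sigma^n=\bigcup_I\Sigma^n_{\bar r,I}$ according to which coordinates satisfy $r_i\geq\bar r$; the integral over each piece is then estimated separately (this is exactly \cite[Propositions~4.6,~4.7]{Schick;NonCptTorsionEst}). The problem with your substitution scheme is visible already at $n=1$: take the error term
\[
\int_0^1 (e^{-r_0 t\varDelta}-\varPi_0)\cdot t^{1/2}D_0\,\Omega_t\cdot\varPi_0\,dr_0.
\]
Your recipe says to absorb $D_0$ into an adjacent heat factor, but the right neighbour is $\varPi_0$, not $e^{-r_1 t\varDelta}$. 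If you treat $t^{1/2}D_0\Omega_t\varPi_0$ as the bounded operator its norm is $\sim t^{1/2}$ and you get growth $t^{1/2-\gamma}$; if instead you move $D_0$ left and use $\varPi_0$ as the $\HS m$-kernel you get $\int_0^1 r_0^{-1/2}\,dr_0\cdot\|\varPi_0\|_{\HS m}$, bounded but with no $t$-decay; and taking $D_0 e^{-r_0 t\varDelta}$ itself as the $\HS m$-kernel runs into small-time blow-up near $r_0=0$. The threshold subdivision handles this cleanly: on $\{r_0<\bar r\}$ the small measure gives $O(\bar r^{1/2})$, while on $\{r_0\geq\bar r\}$ the decay $(r_0 t)^{-\gamma}\leq(\bar r t)^{-\gamma}$ applies uniformly with no integrability issue, and the choice of $\gamma'$ balances the two. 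Your scheme can be fixed by introducing precisely such a threshold, but then it becomes the paper's argument.
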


\subsection{Application: the $L ^2 $-analytic torsion form}
Our main application of Theorem \ref{Main3}
is in establishing the smoothness and transgression formula of the $L ^2 $-analytic torsion form.
Here, we briefly recall the definitions.

On $\wedge ^\bullet T ^* \rM \otimes \rE
\cong \wedge ^\bullet \rH' \otimes \wedge ^\bullet \rV' \otimes \rE $,
define $N_\Omega , N $ to be the number operators of
$\wedge ^\bullet \rH' \cong \pi ^{-1} (\wedge ^\bullet T^* \rB )$ and $\wedge ^\bullet \rV'$ respectively.

Define
$$ F ^\wedge (t)
:= (2 \pi \sqrt{-1}) ^{- \frac{N _\Omega }{2}}
\str _\Psi (2 ^{-1} N (1 + 2 D (t) ^2 ) e ^{- \eth (t) ^2 }).$$
Then under the positivity of the Novikov-Shubin invariant, we have the following well-defined $L^{2}$-analytic torsion form.
\begin{dfn}\label{TorsionDfn}(\cite{Schick;NonCptTorsionEst})
$$ \tau := \int _0 ^\infty \Big\{ - F ^\wedge (t)
+ \frac{\str _\Psi (N \varPi _0 )}{2}
+ \big(\frac{\dim (\rZ) \rk (\rE) \str _\Psi (\varPi _0 )}{4} - \frac{\str _\Psi (N \varPi _0 )}{2} \big)
(1 - 2 t) e ^{-t} \Big\} \frac {d t}{t} .$$
\end{dfn}

In \cite{Schick;NonCptTorsionEst}, it is only shown that the form $\tau$ is continuous. Next we will show that indeed the form $\tau$ is smooth.
\begin{thm}
\label{MainThm}
The form $\tau$ is smooth, i.e. $\tau \in \Gamma ^\infty (\wedge ^\bullet T ^* \rB)$.
\end{thm}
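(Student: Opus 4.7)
The plan is to prove smoothness of $\tau$ by showing that for every $m\in\bbN$, the $t$-integrand is absolutely integrable as a map from $(0,\infty)$ into the Banach space of $C^m$-forms on the compact base $\rB$, so that $\tau$ is $C^m$ for every $m$ and hence smooth.

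The first step I would carry out is to show that the supertrace extends to a bounded linear map $\str_\Psi:\bar\Psi^{-\infty}_m(\rM\times_\rB\rM,\wedge^\bullet\rV'\otimes\rE)^\rG\to C^m(\rB)$ with estimate $\|\str_\Psi\psi\|_{C^m(\rB)}\leq C_m\|\psi\|_{\HS m}$. The key point is that differentiating $\str_\Psi(\psi)(x)$ in the base direction commutes with the fibre integral against the cut-off $\chi$ and with the matrix trace, and by the definition of $\dot\nabla^{\hat\rE_\flat}$ produces precisely the base covariant derivatives of $\psi$ that appear in the definition of $\|\cdot\|_{\HS m}$; Cauchy--Schwarz applied with the compactly supported $\chi$ then yields the bound. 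Any contributions from differentiating $\chi$ are of lower order in the Sobolev index and are absorbed into the constant $C_m$.

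Next, for the large-$t$ part of the integral, I would combine this $C^m$-continuity of $\str_\Psi$ with Theorem \ref{Main3} to deduce that the combination $-F^\wedge(t)+\str_\Psi(\aleph\varPi_0)/2$ has $C^m(\rB)$-norm of order $O(t^{-\gamma'})$; dividing by $t$ then gives an integrand whose $C^m(\rB)$-norm is $O(t^{-\gamma'-1})$, absolutely integrable on $[1,\infty)$, while the exponential correction $(1-2t)e^{-t}/t$ is integrable at infinity in every $C^m$. For the small-$t$ part I would invoke the standard short-time asymptotics of the rescaled Bismut superconnection heat kernel, which apply verbatim since the heat kernel is exponentially concentrated near the diagonal and the covering structure plays no role on small length scales; the cancellation of leading terms engineered in the definition of $\tau$ then produces an integrable majorant as $t\to 0$, uniformly in the $C^m(\rB)$-norm.

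The main obstacle I foresee is justifying the interchange of base-direction differentiation with both the Duhamel expansion defining $e^{-\eth(t)^2}$ and the improper time integral. The Sobolev machinery of Sections 1 and 2---Corollary \ref{Main1} for term-by-term control of the Duhamel series and Theorem \ref{Main2} for uniform polynomial decay of $e^{-t\varDelta}-\varPi_0$ in every $\|\cdot\|_{\HS m}$-norm---is precisely what makes both interchanges legal via dominated convergence in the Banach spaces $\bar\Psi^{-\infty}_m$, and this is what upgrades the weak-derivative smoothness obtained in \cite{Schick;NonCptTorsionEst} to genuine $C^\infty$ smoothness of $\tau$.
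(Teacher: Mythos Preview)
Your first step contains a genuine gap. The claimed inequality $\|\str_\Psi\psi\|_{C^m(\rB)}\leq C_m\|\psi\|_{\HS m}$ is false in general: the norm $\|\psi\|_{\HS m}$ is an $L^2$ norm of (derivatives of) the kernel over $\rZ_x\times\rZ_x$, whereas $\str_\Psi(\psi)$ requires the restriction of $\psi$ to the diagonal $\{y=z\}$, a set of measure zero. Cauchy--Schwarz against $\chi$ gives you at best $\|\chi\|_{L^2}$ times the $L^2$ norm of the diagonal restriction of $\partial^k\psi$, and that quantity is \emph{not} dominated by $\|\psi\|_{\HS m}$. This is the familiar fact that a Hilbert--Schmidt operator need not be trace class, and even when it is, its trace is not controlled by its Hilbert--Schmidt norm.

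The paper avoids this by the semigroup factorisation
$e^{-\eth(t)^2}=2^{-\aleph_\Omega/2}\,e^{-\eth(t/2)^2}\,e^{-\eth(t/2)^2}\,2^{\aleph_\Omega/2}$,
together with an analogous factorisation of $\str_\Psi(\aleph\varPi_0)$, so that one is always taking the trace of a \emph{product} $G=A\circ B$ with one factor equal to $e^{-\eth(t/2)^2}-\varPi_0 e^{(\Omega\varPi_0)^2}$. For a convolution product the $y$-integral is already present, and the honest Cauchy--Schwarz bound $|\tr_\Psi(A\circ B)|\leq\|A\|_{\HS 0}\|B\|_{\HS 0}$ applies. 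Base derivatives of $\str_\Psi(G)$ are then handled by differentiating under the integral, using the Leibniz rule on the product, and bounding each resulting term by $\|A\|_{\HS j}\|B\|_{\HS k}$ with $j+k\leq m$; Theorem~\ref{Main3} supplies the $O(t^{-\gamma'})$ decay of the factor carrying the difference. This factorisation is exactly the idea your sketch is missing. Your overall strategy could alternatively be repaired by trading $\|\cdot\|_{\HS m'}$ with $m'$ large (via Sobolev embedding in the fibre variable) for pointwise control on the diagonal, which is harmless since Theorem~\ref{Main3} holds for every $m$; but that is not what you wrote, and it is less clean than the product trick.
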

\begin{proof}
Using \cite[Proposition 9.24]{BGV;Book}, the derivatives of the $t$-integrand are bounded as $t \to 0$.
It follows that its integral over $[0, 1 ]$ is smooth.

We turn to study the large time behavior.
Consider $\str ( 2 ^{-1} N (e ^{- \eth (t) ^2} - \varPi _0 ))$.
Using the semi-group property, we can write
$$ e ^{- \eth (t) ^2} = 2 ^{- \frac{N _\Omega}{2}} e ^{- \eth (\frac{t}{2}) ^2}
e ^{- \eth (\frac{t}{2}) ^2} 2 ^{\frac{N _ \Omega}{2}} .$$
Also, since
$\str (N \varPi _0 (\Omega \varPi _0 ) ^{2 j} )
= \str ([N \varPi _0 (\Omega \varPi _0 ) , \varPi _0 (\Omega \varPi _0 ) ^{2 j - 1} ]) = 0$
for any $j \geq 1$ one has
$$ \str (N \varPi _0 )
= \str (N\varPi _0 e ^{(\Omega \varPi _0 ) ^2 })
= 2 ^{- \frac{N _\Omega}{2}}
\str (N \varPi _0 e ^{(\Omega \varPi _0 ) ^2 } \varPi _0 e ^{(\Omega \varPi _0 ) ^2 }) .$$
Therefore
\begin{align*}
\str ( 2 ^{-1} N (e ^{- \eth (t) ^2} - \varPi _0 ))
=& 2 ^{- \frac{N _\Omega}{2}}  \str ( 2 ^{-1} N ( e ^{- \eth (\frac{t}{2}) ^2} e ^{- \eth (\frac{t}{2}) ^2}
- \varPi _0 e ^{(\Omega \varPi _0 ) ^2 } \varPi _0 e ^{(\Omega \varPi _0 ) ^2 })) \\
=& 2 ^{- \frac{N_\Omega}{2}} \str \big( 2 ^{-1} Ne ^{- \eth (\frac{t}{2}) ^2}
( e ^{- \eth (\frac{t}{2}) ^2} - \varPi _0 e ^{(\Omega \varPi _0 ) ^2}) \big) \\
&+ 2 ^{- \frac{N _\Omega}{2}} \str \big( 2 ^{-1} N
( e ^{- \eth (\frac{t}{2}) ^2} - \varPi _0 e ^{(\Omega \varPi _0 ) ^2 }) \varPi _0 e ^{(\Omega \varPi _0 ) ^2 } \big).
\end{align*}

Now consider the $L ^2 (\rB)$-norm of
$ \str _\Psi \big( 2 ^{-1} N e ^{- \eth (\frac{t}{2}) ^2}
( e ^{- \eth (\frac{t}{2}) ^2} - \varPi _0 e ^{(\Omega \varPi _0 ) ^2}) \big)$.
To shorten notations, denote
$G :=  2 ^{-1} N e ^{- \eth (\frac{t}{2}) ^2}
( e ^{- \eth (\frac{t}{2}) ^2} - \varPi _0 e ^{(\Omega \varPi _0 ) ^2}) $.
Writing $G$ as a convolution product, then there exists constant $C_0>0$ such that
\begin{align*}
\int _{\rB} \Big| \int _{\rZ _x} & \chi (x, z) \str (G (x, z, z)) \mu _x (z) \Big|^2 \mu _\rB (x) \\
= \int _\rB & \Big| \int _{\rZ _x} \chi \str \Big( \frac{N }{2} \int _{y \in \rZ _x}
e ^{- \eth (\frac{t}{2}) ^2} (x, z, y)
( e ^{- \eth (\frac{t}{2}) ^2} - \varPi _0 e ^{(\Omega \varPi _0 ) ^2}) (x, y, z) \mu _x (y) \Big) \mu _x (z)
\Big| ^2 \mu _\rB (x)\\
\leq C _0 & \int _\rB \Big( \int _{\rZ _x} \chi \int _{y \in \rZ _x}
\big| e ^{- \eth (\frac{t}{2}) ^2} \big| (x, z, y)
\big| e ^{- \eth (\frac{t}{2}) ^2} - \varPi _0 e ^{(\Omega \varPi _0 ) ^2} \big| (x, y, z) \mu _x (y) \mu _x (z)
\Big) ^2 \mu _\rB (x)\\
\leq & C _0 \| e ^{- \eth (\frac{t}{2}) ^2} \| ^2 _{\HS 0 }
\| e ^{- \eth (\frac{t}{2}) ^2} - \varPi _0 e ^{(\Omega \varPi _0 ) ^2} \| ^2 _{\HS 0},
\end{align*}
where we used the Cauchy-Schwarz inequality three times. Since
$\| e ^{- \eth (\frac{t}{2}) ^2} \| _{\HS 0 } $ is bounded for $t$ large (by triangle inequality),
the expression above is $O (t ^{- \gamma '})$.

We turn to estimate its derivatives.
For any vector field $X$ on $\rB$,
\begin{align*}
\nabla ^{T \rB} _X \str _\Psi (G)
=& \int (L _{X ^\rH } \chi (x, z)) \str (G (x, z, z)) \mu _x (z) \\
&+ \int \chi (x, z) ( L ^{\nabla ^{\pi ^{-1} T \rB }} _{X ^\rH } \str (G (x, z, z))) \mu _x (z) \\
&+ \int \chi (x, z) \str (G (x, z, z)) (L _{X ^\rH }\mu _x (z)) .
\end{align*}
Differentiating under the integral sign is valid because we knew a-priori that the integrands are all $L ^1 $.
Since $L _{X ^\rH }\mu _x (z) $ equals $\mu _x (z) $ multiplied by some bounded functions,
it follows that the last term $\int \chi (x, z) \str (G (x, z, z)) (L _{X ^\rH }\mu _x (z)) $ is $O (t ^{- \gamma ' })$.

For the first term, we write
$ L _{X ^\rH } \chi (x, z) = \sum _{g \in \rG } (g ^* \chi) (x, z) (L _{X ^\rH } \chi ) (x, z).$
The sum is finite because $L _{X ^\rH } \chi $ is compactly supported.
By $\rG$-invariance,
$$ \int  (g ^* \chi ) (x, z) \str (G ( x, z, z)) \mu _x (z)
= \int \chi (x, z) \str (G (x, z, z)) \mu _x (z) .$$
Since $(L _{X ^\rH } \chi ) (x, z)$ is bounded, it follows that
$ \int (L _{X ^\rH } \chi ) (x, z) \str (G (x, z, z)) \mu _x (z) $ is also $O (t ^{- \gamma ' })$.

As for the second term,
we differentiate under the integral sign, then use the Leibniz rule to get that there exists constant $C_1>0$ such that
\begin{align*}
| L ^{\nabla ^{\pi ^{-1} T \rB }} _{X ^\rH } & \str (G (x, z, z))| \\
\leq & C _1 \Big( \int _{\rZ _x}
\big| L ^{\nabla ^{\wedge ^\bullet \rH' \otimes \wedge ^\bullet \rV' \otimes \hat \rE }} _{X ^\rH }
e ^{- \eth (\frac{t}{2}) ^2} (x, z, y) \big|
\big| e ^{- \eth (\frac{t}{2}) ^2} - \varPi _0 e ^{(\Omega \varPi _0 ) ^2} (x, y, z) \big| \mu _x (y) \\
&+ \int _{\rZ _x}
\big| e ^{- \eth (\frac{t}{2}) ^2} (x, z, y) \big|
\big| L ^{\nabla ^{\wedge ^\bullet \rH' \otimes \wedge ^\bullet \rV' \otimes \hat \rE }} _{X ^\rH }
(e ^{- \eth (\frac{t}{2}) ^2} - \varPi _0 e ^{(\Omega \varPi _0 ) ^2}) (x, y, z) \big| \mu _x (y) \\
&+ \int _{\rZ _x}
\big| e ^{- \eth (\frac{t}{2}) ^2} (x, z, y) \big|
\big| e ^{- \eth (\frac{t}{2}) ^2} - \varPi _0 e ^{(\Omega \varPi _0 ) ^2} (x, y, z) \big|
\sup | L _{X ^\rH } \mu | \mu _x (y) \Big), \\
\int _\rB \Big| \int _{\rZ _x} & \chi (x, z)
\big( L ^{\nabla ^{\pi ^{-1} T \rB }} _{X ^\rH } \str (G (x, z, z)) \big) \mu _x (z) \Big|^2 \mu _\rB (x) \\
\leq & C _1 \big( \| e ^{- \eth (\frac{t}{2}) ^2} \|^2 _{\HS 1 }
\| e ^{- \eth (\frac{t}{2}) ^2} - \varPi _0 e ^{(\Omega \varPi _0 ) ^2} \|^2 _{\HS 0} \\
&+ \| e ^{- \eth (\frac{t}{2}) ^2} \|^2 _{\HS 0 }
\| e ^{- \eth (\frac{t}{2}) ^2} - \varPi _0 e ^{(\Omega \varPi _0 ) ^2} \|^2 _{\HS 1} \\
&+ \sup | L _{X ^\rH } \mu | \| e ^{- \eth (\frac{t}{2}) ^2} \|^2 _{\HS 1 }
\| e ^{- \eth (\frac{t}{2}) ^2} - \varPi _0 e ^{(\Omega \varPi _0 ) ^2} \|^2 _{\HS 0} \big) \\
=& O (t ^{- \gamma ' }).
\end{align*}
Clearly the above arguments can be repeated and one concludes that all Sobolev norms of
$\str _\Psi (G) $ are $O (t ^{- \gamma '})$.

By exactly the same arguments, we have as $t \to \infty $,
$$ \str _\Psi \big( 2 ^{-1} N
( e ^{- \eth (\frac{t}{2}) ^2} - \varPi _0 e ^{(\Omega \varPi _0 ) ^2 }) \varPi _0 e ^{(\Omega \varPi _0 ) ^2 } \big)
= O (t ^{- \gamma ' }) ,$$
in all Sobolev norms.

As for $\str _\Psi ( 2 ^{-1} N ( D (t) ^2 e ^{- \eth (t) ^2} )) $,
one has $D (t) ^2 = 2 (2 ^{- \frac{ N _\Omega }{2}} D (\frac{t}{2}) ^2 2 ^{\frac{N _\Omega }{2}})$.
Therefore
\begin{align*}
\str _\Psi \big(\frac {N }{2} ( D (t) ^2 e ^{- \eth (t) ^2} ))
=& 2 ^{- \frac{ N _\Omega }{2}}
\str _\Psi \big( N( D (\frac{t}{2}) ^2 e ^{- \eth (\frac{t}{2}) ^2} e ^{- \eth (\frac{t}{2}) ^2}
- \varPi _0 (\Omega \varPi _0 ) ^2 e ^{(\Omega \varPi _0 )^2} e ^{(\Omega \varPi _0 )^2} ) \big) \\
=& 2 ^{- \frac{ N_\Omega }{2}} \str _\Psi \big( N( D (\frac{t}{2}) ^2 e ^{- \eth (\frac{t}{2}) ^2}
- \varPi _0 (\Omega \varPi _0 ) ^2 e ^{(\Omega \varPi _0 )^2} ) e ^{- \eth (\frac{t}{2}) ^2} \big) \\
&- 2 ^{- \frac{ N_\Omega }{2}} \str _\Psi \big( N\varPi _0 (\Omega \varPi _0 ) ^2 e ^{(\Omega \varPi _0 )^2}
(e ^{- \eth (\frac{t}{2}) ^2} - e ^{(\Omega \varPi _0 )^2} ) \big),
\end{align*}
which is also $ O (t ^{- \gamma ' })$ as $t \to \infty $ by similar arguments.

By the Sobolev embedding theorem (for the compact manifold $\rB$),
it follows that
$$ - F ^\wedge (t)
+ \frac{\str _\Psi (N \varPi _0 )}{2}
+ \big(\frac{\dim (\rZ) \rk (\rE) \str _\Psi (\varPi _0 )}{4} - \frac{\str _\Psi (N\varPi _0 )}{2} \big)
(1 - 2 t) e ^{-t} $$
and its all derivatives are $ O (t ^{- \gamma ' })$ uniformly.

Finally, since all derivatives of the $t$-integrand in Definition \ref{TorsionDfn} are $L ^1$,
derivatives of $\tau $ exist and equal differentiations under the $t$-integration sign.
Hence we conclude that the torsion $\tau $ is smooth.
\end{proof}

\begin{rem}
If $Z$ is $L^2$-acyclic and of determinant class (cf. \cite[Def. 6.3]{Schick;NonCptTorsionEst}), the analogue of Remark \ref{SquareNS} reads
$$ \int _0 ^\infty \| e ^{- t \varDelta } \| ^2 _{\HS 0} \frac{d t}{t}
= \int _0 ^\infty \| e ^{- t \varDelta } \| _\tau \frac{d t}{t} < \infty $$
(note that $\varPi _0 = 0 $ by hypothesis).
Unlike having positive Novikov-Shubin invariant,
the heat operator is not of determinant class in $\| \cdot \| _{\HS 0 }$.
\end{rem}

Given a power series $f (x) = \sum a _j x ^ j$. For clarity, let $h $ be the metric on
$\wedge ^ \bullet \rV \otimes \rE $  and we denote
\begin{align*}
f ( \nabla ^{\wedge ^\bullet \rV' \otimes \rE } , h)
&:= \str \Big( \sum _j a _j \big( \frac{1}{2}
(\nabla ^{\wedge ^\bullet \rV' \otimes \rE } - (\nabla ^{\wedge ^\bullet \rV' \otimes \rE })^*) \big)^{j} \Big)
\in \Gamma ^\infty (\wedge ^\bullet T ^* \rM ), \\
f ( \nabla ^{\wedge ^\bullet \rV' \otimes \rE } , h) _{H ^\bullet (\rZ , \rE)}
&:= \str _\Psi \Big( \sum _j a _j \big( \frac{1}{2} \varPi _0
(\nabla ^{\wedge ^\bullet \rV' _\flat \otimes \rE _\flat  }
- (\nabla ^{\wedge ^\bullet \rV _\flat' \otimes \rE _\flat })^*) \varPi _0 \big)^{j} \Big)
\in \Gamma ^\infty (\wedge ^\bullet T ^* \rB ).
\end{align*}
Note that the summations are only up to $\dim \rM$.

Let $TZ$  be the vertical tangent bundle of the fiber bundle $M\to B$ and recall that we have chosen a splitting of $TM$ and defined a Riemannian metric on $TM$. Let $P^{TZ}$ denote the projection from $TM$ to $TZ$. Let $\nabla^{TM}$ be the corresponding Levi-Civita connection on $TM$ and define $\nabla^{TZ}=P^{TZ}\nabla^{TM}P^{TZ}$, a connection on $TZ$. The restriction of $\nabla^{TZ}$ to a fiber coincides with the Levi-Civita connection of the fiber. Let $R^{TZ}$ be the curvature of $\nabla^{TZ}$.

For $N$ even, let ${\rm Pf}: \mathfrak{so}(N)\to \mathbb{R}$ denote the Pfaffian and put
\begin{align}
e\left(TZ,\nabla^{TZ}\right):=\left\{\begin{matrix}{\rm Pf} \left[{{R^{TZ}}\over{2\pi}}\right]& {\rm if}\ {\rm dim}(Z)\ {\rm is}\ {\rm even},\\0& {\rm if}\ {\rm dim}(Z)\ {\rm is}\ {\rm odd}.\end{matrix}\right.
\end{align}

A classical argument \cite{Bismut;AnaTorsion,Zhang;EtaTorsion,Schick;NonCptTorsionEst} then gives:
\begin{cor}
If $\dim \rZ = 2 n$ is even
one has the transgression formula
$$ d \tau (x)
= \int _{\rZ _x} \chi (x, z)
e (T \rZ,\nabla^{TZ} ) f ( \nabla ^{\wedge ^\bullet \rV' \otimes \rE } )
- f ( \nabla ^{\wedge ^\bullet \rV' \otimes \rE } ) _{H ^\bullet (\rZ , \rE)},$$
with $f (x) = x e ^{ x ^2 } $.
\end{cor}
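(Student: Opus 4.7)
The plan is to follow the Bismut-Lott transgression argument \cite{Bismut;AnaTorsion}, using the Sobolev estimates from Sections 1--3 to justify each analytic step in our $\cL ^2 $ setting. The proof of the preceding theorem already shows that all $x$-derivatives of the integrand in Definition \ref{TorsionDfn} are uniformly $\cL ^1 (dt/t)$ on compact subsets of $\rB$, so differentiation under the integral sign is valid and
$$ d \tau (x) = \int _0 ^\infty d \big(-F ^\wedge (t) \big) \frac{dt}{t} + (\text{contributions from the explicit $\varPi _0$-terms in Definition \ref{TorsionDfn}}) .$$
The standard superconnection identity \cite[Theorem 3.21]{Bismut;AnaTorsion}, which rests only on the superconnection formalism and the cyclicity of $\str _\Psi $, expresses $d F ^\wedge (t) $ as a total $t$-derivative of an odd form built from $D (t) e ^{-\eth (t) ^2}$; integration by parts in $t$ then reduces the computation of $d \tau$ to the $t \to 0$ and $t \to \infty$ boundary values.

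For the $t \to 0 $ boundary, I would apply Bismut's local index theorem. Since the small-time asymptotic of the heat kernel is local on the diagonal and $\rM$ has bounded geometry, the Getzler rescaling argument applies fiber-wise without modification, producing the Euler form $e (T \rZ) $ multiplied by $f (\nabla ^{\wedge ^\bullet \rV' \otimes \rE })$; this uses $\dim \rZ$ even so that the Euler form survives. For the $t \to \infty $ boundary, Theorem \ref{Main3} provides $\| \cdot \| _{\HS m}$-convergence of $D (t) ^k e ^{-\eth (t) ^2}$ to $\varPi _0 (\Omega \varPi _0) ^k e ^{(\Omega \varPi _0 ) ^2}$; combined with the vanishing $\str ( \aleph \varPi _0 (\Omega \varPi _0 ) ^{2j}) = 0$ already exploited in the smoothness proof, this identifies the limit with $\tfrac{1}{2} \str _\Psi (\aleph \varPi _0 ) - f (\nabla ^{\wedge ^\bullet \rV' \otimes \rE}) _{H ^\bullet (\rZ , \rE)}$. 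The explicit constants in Definition \ref{TorsionDfn} are engineered precisely so that the $(1 - 2t) e ^{-t}$ counter-term cancels against the regular part $\tfrac{1}{4} \dim (\rZ) \rk (\rE) \str _\Psi (\varPi _0)$ of the $t \to 0 $ asymptotic and against the constant $\tfrac{1}{2} \str _\Psi (\aleph \varPi _0)$ at $t \to \infty$, leaving only the two pieces in the claimed transgression formula.

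The main technical obstacle is that, in order to commute the horizontal differential $d$ past the $\rG$-trace $\str _\Psi$, one picks up Leibniz terms involving $L _{X ^\rH} \chi $ and $L _{X ^\rH} \mu _x $, exactly as in the last displayed estimate of the smoothness proof. These terms are controlled by Cauchy-Schwarz on the Hilbert-Schmidt norms $\| \cdot \| _{\HS m}$, and the polynomial decay $O (t ^{- \gamma '})$ afforded by Theorem \ref{Main3} (after squaring, in the sense of Remark \ref{SquareNS}) guarantees that every resulting term is uniformly $\cL ^1 (dt/t)$ in $x$, so that the boundary-value analysis above passes to the integral level and produces the transgression formula.
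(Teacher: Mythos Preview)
Your proposal is correct and follows precisely the route the paper intends: the paper gives no proof of this corollary at all, stating only that ``A classical argument \cite{Bismut;AnaTorsion,Zhang;EtaTorsion,Schick;NonCptTorsionEst} then gives'' the result, and your sketch is a faithful outline of that classical Bismut--Lott transgression argument, with the $t\to\infty$ step justified by Theorem~\ref{Main3} and the differentiation under $\str_\Psi$ handled exactly as in the smoothness proof. One small remark: in the even-dimensional case the $\tfrac14\dim(\rZ)\rk(\rE)\str_\Psi(\varPi_0)$ term does not actually arise from the $t\to 0$ asymptotic of $F^\wedge$ (the Lemma preceding Definition~\ref{TorsionDfn} gives $F^\wedge(t)=O(t^{1/2})$ there), but this is harmless for the final formula since the counter-terms are closed anyway.
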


Now let $ h _l $ be a family of $\rG$-invariant metrics on $\wedge ^\bullet \rV \otimes \rE $, $l \in [0, 1]$.
Define
$$ \tilde f (\nabla ^{\wedge ^\bullet \rV' \otimes \rE } , h _l)
:= \int _0 ^1 (2 \pi \sqrt{-1} ) ^{\frac{N_\Omega }{2}}
\str \Big( (h _l) ^{-1} \frac{d h _l}{d l} f ' ( \nabla ^{\wedge ^\bullet \rV' \otimes \rE } , h_l ) \Big) d l ,$$
and similarly for $\tilde f (\nabla ^{\wedge ^\bullet \rV' \otimes \rE } , h _l) _{H ^\bullet (\rZ , \rE )} $.
Note that $f ' ( \nabla ^{\wedge ^\bullet \rV' \otimes \rE } , h _l)$ uses the adjoint connection with respect to $h _l$.

Let $\widehat{e}\left(TZ,\nabla^{TZ,0},\nabla^{TZ,1}\right)\in Q^{M}/Q^{M,0}$ (cf. \cite{Bismut;AnaTorsion}) be the secondary class associated to the Euler class. Its representatives are forms of degree ${\rm dim}(Z)-1$ such that
\begin{align}
d\widehat{e}\left(TZ,\nabla^{TZ,0},\nabla^{TZ,1}\right)=e\left(TZ,\nabla^{TZ,1}\right)-e\left(TZ,\nabla^{TZ,0}\right).
\end{align}
If ${\rm dim}(Z)$ is odd, we take $\widehat{e}\left(TZ,\nabla^{TZ,0},\nabla^{TZ,1}\right)$ to be zero.

One has an anomaly formula \cite[Theorem 3.24]{Bismut;AnaTorsion}.

\begin{lem}
Modulo exact forms
\begin{align}
\label{Anomaly}
\tau _1 - \tau _0
=& \int_{ \rZ _x } \chi(x, z) \widehat{e} (TZ,\nabla^{TZ,0},\nabla^{TZ,1})
f (\nabla ^{\wedge ^\bullet \rV' \otimes \rE } , h _0) \\ \nonumber
&+ \int_{\rZ _x} \chi(x, z) e (T \rZ , \nabla^{TZ,1}) \tilde f (\nabla ^{\wedge ^\bullet \rV' \otimes \rE } , h _l)
- \tilde f (\nabla ^{\wedge ^\bullet \rV' _\flat \otimes \rE _\flat } , h _l) _{H ^\bullet (\rZ , \rE )}.
\end{align}
\end{lem}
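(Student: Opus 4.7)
The plan is to follow the classical Bismut-Lott double transgression argument \cite[Theorem 3.24]{Bismut;AnaTorsion}, adapted to the $\cL^2$-setting by using the uniform Sobolev estimates from Theorem \ref{Main3} and Corollary \ref{Main1}. The key observation is that $\tau$ is the regularized integral over $t \in (0,\infty)$ of $F^\wedge(t)$, so variations in $h_l$ can be expressed as boundary contributions of a Chern-Simons-type transgression on a two-parameter family in $(l,t)$.

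Concretely, I would introduce the rescaled flat Bismut super-connection $\eth(t,l)$ built from the family $h_l$, and form the parameter-enlarged super-connection on $[0,1]_l \times (0,\infty)_t \times \rB$. The usual fiber-wise identities of \cite{Bismut;AnaTorsion} give a transgression formula of the shape
$$ d \, \str_\Psi \bigl( \tfrac{\aleph}{2} (1 + 2 D(t,l)^2) e^{-\eth(t,l)^2} \bigr) = -\bigl(\partial_t \alpha(t,l) + \partial_l \beta(t,l)\bigr) + d_\rB \gamma(t,l) $$
for certain explicit $\alpha,\beta,\gamma$. Integrating over the rectangle $[0,1]_l \times [\varepsilon, T]_t$ and applying Stokes in the $(l,t)$-parameters, then sending $\varepsilon \to 0$ and $T \to \infty$, produces $\tau_1 - \tau_0$ on the left (modulo $d_\rB$-exact forms) and the two boundary contributions at $t = 0$ and $t = \infty$ on the right.

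The $\varepsilon \to 0$ boundary is local on each fiber and is handled by the Bismut local index theorem together with \cite[Proposition 9.24]{BGV;Book}; it produces the Euler form contribution $\int_{\rZ_x} \chi(x,z)\, e(T\rZ, h_1) \tilde f(\nabla^{\wedge^\bullet \rV' \otimes \rE}, h_l)$, together with the $\hat e(T\rZ, h_l)$-term coming from the prescribed primitive of $e(T\rZ, h_1) - e(T\rZ, h_0)$. The $T \to \infty$ boundary contribution involves $e^{-\eth(t,l)^2} - \varPi_0 e^{(\Omega \varPi_0)^2}$, which by Theorem \ref{Main3} is $O(t^{-\gamma'})$ in every $\| \cdot \|_{\HS m}$-norm, uniformly in $l \in [0,1]$. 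Hence the limit exists, can be differentiated under the $z$-integral as in the smoothness proof above, and equals $\tilde f(\nabla^{\wedge^\bullet \rV'_\flat \otimes \rE_\flat}, h_l)_{H^\bullet(\rZ, \rE)}$.

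The main obstacle is the rigorous treatment of the $T \to \infty$ boundary: in the compact-fiber case of \cite{Bismut;AnaTorsion} the spectral gap makes all convergences trivial, whereas here we rely on polynomial decay in Sobolev norms. The point is that Theorem \ref{Main3} gives uniform $\| \cdot \|_{\HS m}$-control of $D(t,l)^k e^{-\eth(t,l)^2} - \varPi_0(\Omega \varPi_0)^k e^{(\Omega \varPi_0)^2}$, and Corollary \ref{Main1} converts bounds on the fiber-wise operators $D_0, \Omega_t$ into $\| \cdot \|_{\HS m}$-bounds after composition. This is precisely what is needed to commute $d_\rB$ with the $T \to \infty$ limit and to apply Fubini between the $t$- and $l$-integrations. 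Once these uniform estimates are in place, the remaining algebraic identities are exactly those of \cite[Section 3]{Bismut;AnaTorsion} and \cite[Section 5]{Schick;NonCptTorsionEst}, and the anomaly formula follows.
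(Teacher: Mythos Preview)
Your proposal is correct and is exactly the classical argument the paper has in mind: the paper does not give a detailed proof of this lemma but simply cites \cite[Theorem~3.24]{Bismut;AnaTorsion} (together with \cite{Zhang;EtaTorsion,Schick;NonCptTorsionEst}), relying on the fact that once the smoothness of $\tau$ and the large-time Sobolev estimates of Theorem~\ref{Main3} are in place, the Bismut--Lott double-transgression argument goes through verbatim. Your sketch fills in precisely those details; the only point to make explicit is that the constants in Theorem~\ref{Main3} can be taken uniform in $l\in[0,1]$ by compactness of the parameter interval and smooth dependence of all geometric data on $h_l$.
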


In particular, the degree-0 part of Equation \eqref{Anomaly} is the anomaly formula for the $L ^2$-Ray-Singer
analytic torsion, which is a special case of \cite[Theorem 3.4]{Zhang;CoveringCheegerMuller}.

\begin{rem}
Let $\rZ _0 \to M _0 \to B$ be a fiber bundle with compact fiber $Z _0 $,
$\rZ \to \rM \to \rB$ be the normal covering of the fiber bundle $\rZ _0 \to \rM _0 \to \rB$.
Then one can define the Bismut-Lott and $L ^2$-analytic torsion form
$\tau _{\rM _0 \to \rB }, \tau _{\rM _0 \to \rB } \in \Gamma ^\infty (\wedge ^\bullet T^* \rB)$,
and one has the respective transgression formulas
\begin{align*}
d \tau _{\rM _0 \to \rB } &= \int _{\pi _0 ^{-1} (x)}
e (T \rZ _0 ,\nabla^{TZ_{0}}) f ( \nabla ^{\wedge ^\bullet \rV' _0 \otimes \rE _0 } )
- f ( \nabla ^{\wedge ^\bullet \rV' _0 \otimes \rE _0 } ) _{H ^\bullet (\rZ _0, \rE _0)} \\
d \tau _{\rM \to \rB } &= \int _{\rZ _x} \chi (x, z)
e (T \rZ ,\nabla^{TZ}) f ( \nabla ^{\wedge ^\bullet \rV' \otimes \rE } )
- f ( \nabla ^{\wedge ^\bullet \rV' \otimes \rE } ) _{H ^\bullet (\rZ , \rE)}.
\end{align*}
Suppose further that the DeRham cohomologies are trivial:
$$H ^\bullet (\rZ _0 , \rE|_{Z _0}) = H ^\bullet_{L ^ 2} (\rZ , \rE| _\rZ )= \{ 0 \}. $$
Then $d ( \tau _{\rM \to \rB } - \tau _{\rM _0 \to \rB } ) = 0 $.
Hence $\tau _{\rM  \to \rB } - \tau _{\rM _0 \to \rB }$ defines some class in the DeRham cohomology of $\rB$.
We also remark that this form was also mentioned in \cite[Remark 7.5]{Schick;NonCptTorsionEst},
as a weakly closed form.
\end{rem}

\begin{rem}
In our preprint \cite{SoSu}, we extended the methods in this paper to the case when $B/G$ is not a manifold. Regarding $B/G$ as a non-commutative space, in \cite{SoSu} we defined the non-commutative analytic torsion form and got a non-commutative Riemannian-Roch-Grothendieck theorem.
\end{rem}

\bibliographystyle{plain}

\end{document}